\newtheorem{definition}{Definition}[section]
\newtheorem{theorem}[definition]{Theorem}
\newtheorem{proposition}[definition]{Proposition}
\newtheorem{corollary}[definition]{Corollary}
\newtheorem{example}[definition]{Example}
\def\emph#1{{\bfseries\itshape{#1}}}
\def\R{\mathbb{R}}               
\newcommand{\ltilde}[3][0]{\altura=0 \advance\altura by #1 \ancho=#2 \anchom=\ancho \divide\anchom by 2
\anchoa=\ancho \divide\anchoa by 4 \anchob=\anchom \advance\anchob by \anchoa \kern-3pt \begin{array}[b]{c} \begin{picture}(1,1)(\anchom,-\altura)
\qbezier(0,2)(\anchoa,5)(\anchom,2) \qbezier(\anchom,2)(\anchob,-1)(\ancho,4) \qbezier(0,2)(\anchoa,4.5)(\anchom,1.8) \qbezier(\anchom,1.8)(\anchob,-1.5)(\ancho,4)
\end{picture} \\[-4pt]{#3} \end{array} \kern-4pt	}
\newcommand{\w}{\omega}
\newcommand{\lc}{T^*\phi}
\newcommand{\piq}{\pi_Q}
\newcommand{\pr}{\mathrm{pr}}
\newcommand{\Sect}{\mathrm{Sect}}
\begin{document}

\title[On symplectic lifts of actions for complete Lagrangian fibrations]{On  sympletic lifts of  actions for complete Lagrangian fibrations}

\author{Juan Carlos Marrero}
\address{J.\ C.\ Marrero:
Unidad asociada ULL-CSIC, Geometr{\'\i}a Diferencial y Mec\'anica Geom\'etrica, Departamento de Matem\'atica, Estad\'istica e I.O., Facultad de
Matem\'aticas, Universidad de la Laguna, La Laguna, Tenerife,
Canary Islands, Spain} \email{jcmarrer@ull.edu.es}

\author{Edith Padr{\'o}n}
\address{E.\ Padr{\'o}n: Unidad asociada ULL-CSIC, Geometr{\'\i}a Diferencial y Mec\'anica Geom\'etrica,Departamento de Matem\'atica, Estad\'istica e I.O., Facultad de
Matem\'aticas, Universidad de la Laguna, La Laguna, Tenerife,
Canary Islands, Spain} \email{mepadron@ull.edu.es}

\noindent\thanks{\noindent {\it Mathematics Subject Classification} (2010): 53D05, 53D12, 70G45, 70H15.}

\thanks{\noindent The authors have been
partially supported by the Ministerio de Ciencia e Innovaci\'on (Spain) project  MTM2015-64166-C2-1-P. We also would like to thank  D. Rodr{\'\i}guez and M. Teixid\'o  for their useful comments. }
\noindent\keywords{Lagrangian fibrations, symplectic actions.}

\begin{abstract}
In this note we discuss symplectic lifts of actions for a complete Lagrangian fibration. Firstly, we describe the symplectic cotangent lifts of a $G$-action on a manifold $Q$ in terms of $1$-cocycles in the cohomology of $G$ induced by the action with values in the space of closed $1$-forms on $Q$. After this,  we consider the general case of complete Lagrangian fibrations. 
\end{abstract}

\maketitle

\hfill{ \it Dedicated to our friend Alberto Ibort on the occasion of his  60th birthday}

\setcounter{section}{0}

\section{Introduction}

A Lagrangian fibration $\pi$ is a surjective submersion with total space a symplectic manifold $M$ and such that the fibers of $\pi$ are Lagrangian submanifolds of $M.$ It is well-known that Lagrangian fibrations are closely related with the theory of Arnold-Liouville of completely integrable systems \cite{AM,A} and this is a good motivation for the discussion of such objects.  The typical example of a Lagrangian fibration is the canonical projection from the cotangent bundle $T^*Q$ of a manifold $Q$ over $Q$. This Lagrangian fibration is complete because the vertical lift of every $1$-form on $Q$ is a complete vector field on $T^*Q$ (see, for instance, \cite{LR} for the definition of the vertical lift of a $1$-form to the cotangent bundle). 

\medskip

The vertical lift to the total space of an arbitrary Lagrangian fibration $\pi$ of a $1$-form on the base space is also well-defined vector field and, by analogy with the case of the cotangent bundle, $\pi$ is said to be complete if such vector fields are complete. 

\medskip

Every complete Lagrangian fibration $\pi:M\to Q$ defines a new complete Lagrangian fibration with  total space the quotient $T^*Q/\Lambda$ and base space $Q$, where $\Lambda=\bigcup_{q\in Q}\Lambda_q$ is a Lagrangian submanifold of $T^*Q$ and $\Lambda_q$ is a discrete subgroup of the additive group of $T_q^*Q.$ In fact, $T^*_qQ/\Lambda_q$ is (non-canonically) diffeomorphic to the Lagrangian fiber $\pi^{-1}(q).$  However,  $M$ and $T^*Q/\Lambda$ are not, in general, globally isomorphic. Under a strong hyphothesis, the presence of a global Lagrangian section of the fibration $\pi:M\to Q,$ one may prove that there exists a global fiber-preserving symplectomorphism between $M$ and $T^*Q/\Lambda$. The Lagrangian fibration $\widetilde\pi_Q:T^*Q/\Lambda\to Q$ is called the symplectic reference of $\pi:M\to Q$ (for more details, see \cite{D}).

\medskip

On the other hand,  it is well-known that the standard  example of a sympletic action on the cotangent bundle of a manifold $Q$ is the cotangent lift of an action $\phi$ on $Q.$ This action is very interesting from a mathematical and physical point of view. In fact, it plays an important role in the study of symmetric Hamiltonian systems (see, for instance, \cite{AM, MMOPR, MR, OR}).
\medskip

The aim of this note is to discuss symplectic lifts of actions for complete Lagrangian fibrations. First of all, we consider the particular case when the Lagrangian fibration is the standard projection $\pi_Q:T^*Q\to Q.$ In such a case, we prove that every symplectic lift of a $G$-action $\phi$ on $Q$ is the composition of the cotangent lift of $\phi$ with a translation. This translation is given by a $1$-cocycle in the cohomology of $G$ induced by $\phi$ with values in the 
space of closed $1$-forms on $Q$. Moreover, the symplectic action is completely determined, up to isomorphism, by the cohomology class of the $1$-cocycle. In the general case of an arbitrary complete Lagrangian fibration $\pi:M\to Q,$ we prove similar results using the symplectic reference $\widetilde\pi_Q:T^*Q/ \Lambda\to Q$ of $\pi:M\to Q.$ 

\medskip

The note is structured as follows. In Section \ref{Section2}, we discuss the symplectic lifts of actions for the standard Lagrangian fibration $\pi_Q:T^*Q\to Q$ and,  in Section \ref{Section3},  we obtain the corresponding results for an arbitrary complete Lagrangian fibration.

\section{symplectic cotangent lifts of actions on a manifold}\label{Section2}

Let $Q$ be a manifold of dimension $n.$  We denote by $\theta_Q$ the Liouville $1$-form, by $\w_Q=-d\theta_Q$ the canonical symplectic structure  on the cotangent bundle $T^*Q$ and by $\pi_Q:T^*Q\to Q$ the corresponding projection of $T^*Q$ on $Q$.

\medskip 

In addition, we suppose that we have a  left action $\phi:G\times Q\to Q$ of a Lie group $G$ on $Q$. Denote by  $T^*\phi: G\times T^*Q \to T^*Q$  the cotangent lift action  given by
$$ (T^*\phi)_g\alpha_q:=T^*_{\phi_g(q)}\phi_{g^{-1}}(\alpha_q),\;\;\;\; \mbox{ for }g\in G \mbox{ and } \alpha_q\in T^*_qQ.$$

The cotangent lift $T^*\phi$  of the action $\phi:G\times Q\to Q$  is a  fiberwise symplectic action, i.e.
$$\pi_Q\circ (T^*\phi)_g=\phi\circ \pi_Q,\mbox{ and }(T^*\phi)_g^*\w_Q=\w_Q,\;\;\;\mbox{ for all } g\in G.$$

\medskip

 Now, we ask about if it is possible to consider other symplectic actions on $(T^*Q,\omega_Q)$ which fiber on $\phi$. 

\medskip 

It is well-known that a diffeomorphism $F:T^*Q\to T^*Q$ on $T^*Q$ preserves the Liouville $1$-form $\theta_Q$  if and only if there is a diffeomorphim $f:Q\to Q$ such that $F=T^*f$ (see Proposition 6.3.2 in \cite{MR}).  So, if $F$ is a fiber-preserving diffeomorphism then $F$ is the identity. 

Moreover, we may prove the following result. 

\begin{proposition}\label{lema}
Let $F: T^*Q \to T^*Q$ be a diffeomorphism which is fibered over the identity. The following statements  are equivalent:
\begin{enumerate}
\item[$(i)$]  $F$ is symplectic with respect to the canonical  symplectic structure  $\w_Q$ on $T^*Q$. 
\item[$(ii)$] There exists a unique  closed 1-form $\alpha$ on $Q$ such that $F$ is just the fiber translation $t_\alpha:T^*Q\to T^*Q$ by $\alpha$, that is, 
$$t_\alpha(\gamma_q)=\gamma_q+\alpha(q), \; \mbox{ for all } \gamma_q \in T_q^*Q.$$
\end{enumerate}
\end{proposition}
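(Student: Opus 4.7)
The plan is to prove the two implications separately, using the Liouville $1$-form $\theta_Q$ as the main bookkeeping device and reducing the nontrivial direction to the rigidity result cited from \cite{MR} characterizing $\theta_Q$-preserving diffeomorphisms as cotangent lifts.

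For the direction $(ii)\Rightarrow(i)$ I would rely on the classical identity $t_\alpha^*\theta_Q=\theta_Q+\pi_Q^*\alpha$, which is immediate from the definition of $\theta_Q$ together with $\pi_Q\circ t_\alpha=\pi_Q$. Taking $-d$ gives $t_\alpha^*\w_Q=\w_Q-\pi_Q^*(d\alpha)$, which equals $\w_Q$ precisely because $\alpha$ is closed. Uniqueness of $\alpha$ is trivial: evaluating $t_\alpha$ on the zero section recovers $\alpha$.

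For the converse $(i)\Rightarrow(ii)$, the idea is to build the candidate $\alpha$ out of the $1$-form $\beta:=F^*\theta_Q-\theta_Q$ on $T^*Q$, and to prove that $\beta$ is basic. I would verify in order:
\begin{enumerate}
\item[(a)] $\beta$ is closed, since $d\beta=-F^*\w_Q+\w_Q=0$ by hypothesis $(i)$.
\item[(b)] $\beta$ is horizontal, i.e.\ $i_V\beta=0$ for every $\pi_Q$-vertical vector $V$. Indeed, $\theta_Q$ vanishes on vertical vectors (because $T\pi_Q$ kills them), and since $\pi_Q\circ F=\pi_Q$, the tangent map $TF$ sends vertical vectors to vertical vectors, so $F^*\theta_Q$ is also vertical-annihilating.
\item[(c)] A closed horizontal $1$-form on $T^*Q$ is basic. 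In local Darboux coordinates $(q^i,p_i)$, horizontality forces $\beta=a_i(q,p)\,dq^i$, and the vanishing of the $dp_j\wedge dq^i$ component of $d\beta$ gives $\partial a_i/\partial p_j=0$. Hence $\beta=\pi_Q^*\alpha$ with $\alpha:=a_i(q)\,dq^i$, and $\pi_Q^*d\alpha=d\beta=0$ together with injectivity of $\pi_Q^*$ yields $d\alpha=0$.
\end{enumerate}

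With $\alpha$ at hand, step (a)--(c) give $F^*\theta_Q=\theta_Q+\pi_Q^*\alpha=t_\alpha^*\theta_Q$. Setting $G:=t_{-\alpha}\circ F$, using $\pi_Q\circ F=\pi_Q$ once more, one computes $G^*\theta_Q=\theta_Q$. By Proposition 6.3.2 of \cite{MR}, $G=T^*f$ for some diffeomorphism $f:Q\to Q$; but $G$ is fiber-preserving over the identity, which forces $f=\mathrm{id}_Q$, hence $G=\mathrm{id}_{T^*Q}$ and $F=t_\alpha$. Uniqueness of $\alpha$ again follows by evaluating on the zero section.

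The only step that requires real care is (c), the passage from ``horizontal and closed'' to ``basic''. Horizontal alone is not enough (a horizontal $1$-form can depend on the fiber coordinates), and closedness alone is not enough; it is the combination, together with connectedness of the fibers of $\pi_Q$, that forces the coefficients to be $\pi_Q$-projectable. Everything else is essentially a rewriting of the Liouville calculus plus the cited rigidity lemma.
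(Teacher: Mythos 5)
Your proof is correct, but it follows a genuinely different route from the paper's in the direction $(i)\Rightarrow(ii)$. You work with the Liouville potential: you show that $\beta=F^*\theta_Q-\theta_Q$ is closed and horizontal, argue (correctly, and this is indeed the one step needing care) that a closed horizontal $1$-form on $T^*Q$ is basic because the fibers are connected and $\partial a_i/\partial p_j=0$, and then finish by applying the rigidity result of Proposition 6.3.2 of \cite{MR} to $t_{-\alpha}\circ F$. The paper instead never touches $\theta_Q$ in the proof (it quotes the rigidity result only as motivation beforehand): it defines $\alpha(q)=F(\gamma_q)-\gamma_q$ directly and proves independence of $\gamma_q$ by showing that $T_{\gamma_q}(F-\mathrm{id}_{T^*Q})$ annihilates vertical vectors, using the symplectic characterization $i_{X_{\pi_Q^*\lambda}}\omega_Q=\pi_Q^*\lambda$ of vertical lifts and evaluation on fiberwise linear functions. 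Your version is shorter and delivers closedness and smoothness of $\alpha$ essentially for free (the paper's proof, as written, leaves the closedness of $\alpha$ in the converse direction to be read off from formula (2.1)); the price is dependence on the globally exact potential $\theta_Q$ and on the external lemma from \cite{MR}. The paper's more hands-on argument is the one that transfers to Proposition 3.2 for the symplectic reference $\widetilde\pi_Q:T^*Q/\Lambda\to Q$, where $\widetilde\omega_Q$ has no global primitive and your strategy would not apply.
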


\begin{proof}
Suppose that $F$ is the fiber translation $t_\alpha$ by a 1-form $\alpha$ on $Q$. Then, a direct proof using local coordinates allows to obtain the following formula
\begin{equation}\label{t_alfa}
F^*\w_Q=t_\alpha^*\w_Q=\w_Q-\pi^*_Q(d\alpha).
\end{equation}
Thus, if $\alpha$ is closed, 
$$
F^*\w_Q =\w_Q. 
$$

Conversely, if $(i)$ is satisfied, we  can consider a $1$-form $\alpha \in \Omega^1(Q)$ given by
$$\alpha(q)=F(\gamma_q)-\gamma_q, $$
where $\gamma_q$ is an arbitrary element of $T^*_qQ$. We will see that, for each $q\in Q,$ this definition does not depend on the choice of $\gamma_q \in T^*_qQ$, or equivalently, the linear map $T_{\gamma_q}(F-id_{T^*Q})$ is null on vertical vectors. Note that, since $F$ is fibered over the identity, the map $F-id_{T^*Q}$ is well-defined. 

\medskip

Let $\lambda$ be a 1-form on $Q$, then its vertical lift $X_{\pi_Q^*\lambda}$ is a vector field on $T^*Q$  which is characterized in terms of the canonical sympletic structure on $T^*Q$ as follows (see \cite{LR})
\begin{equation}\label{lambda^v}
i_{X_{\pi_Q^*\lambda}}\w_Q=\pi_Q^*\lambda.
\end{equation}

In fact, we have that (see \cite{LR}) 
\begin{equation}\label{def-v}
X_{\pi_Q^*\lambda}(\beta_q)=\frac{d}{dt}_{|t=0}(\beta_q + t\lambda(q)) \mbox{ for }\beta_q\in T^*_qQ.
\end{equation}
This implies that the vertical bundle $V_{\beta_q}\pi_Q$ of $\pi_Q$ at the point $\beta_q\in T^*_qQ$ is 
$$V_{\beta_q}\pi_Q=\{X_{\pi^*_Q\lambda}(\beta_q)/\lambda\in \Omega^1(Q)\}.$$

Moreover, if $f$ is a fiberwise linear function on $T^*Q$ then there exists a vector field $Y$ on $Q$ such that $f=\widehat{Y}$, that is, 
\begin{equation}\label{Y}
f(\beta_q)=<\beta_q,Y(q)>, \mbox{ for } \beta_q\in T^*_qQ,
\end{equation}
and one may prove that 
\begin{equation}\label{def-vv}
X_{\pi_Q^*\lambda}(\widehat{Y})=\lambda(Y)\circ \pi_Q.
\end{equation}

Now, from (\ref{lambda^v}) and using that $F$ is symplectic, we have that 
\begin{eqnarray*}
i_{X_{\pi_Q^*\lambda}}F^*(\w_Q)=i_{X_{\pi_Q^*\lambda}}\omega_Q=\pi_Q^*\lambda =(\pi_Q\circ F)^*\lambda=F^*(\pi^*_Q\lambda)=F^*(i_{X_{\pi_Q^*\lambda}}\omega_Q).
\end{eqnarray*}

As a consequence,  using the non-degeneracy of $\w_Q$, we deduce that 
\begin{equation}\label{TF}
T_{\gamma_q}F(X_{\pi_Q^*\lambda}(\gamma_q))=X_{\pi_Q^*\lambda}(F(\gamma_q))\mbox{ for all } \gamma_q\in T^*_qQ. 
\end{equation} 

\medskip

On the other hand, since $\pi_Q\circ F=\pi_Q$ then  $T_{\gamma_q}(F-id_{T^*Q})(X_{\pi_Q^*\lambda}(\gamma_q))$ is a vertical vector, so it is characterized by its value on fiberwise linear functions $f \in C^\infty(T^*Q)$. Thus, using (\ref{def-vv}) and (\ref{TF}), we have that
$$
T_{\gamma_q}(F-id_{T^*Q})(X_{\pi_Q^*\lambda}({\gamma_q}))(f)= X_{\pi_Q^*\lambda}({F(\gamma_q)})(f)-X_{\pi_Q^*\lambda}({\gamma_q})(f)=0, 
$$
which proves the proposition. 

\end{proof} 

Now, suppose that we have an arbitrary  symplectic action $\Phi:G\times T^*Q\to T^*Q$   such that projects on an action $\phi:Q\to Q$, i.e.  the following diagram commutes
\[
\xymatrix{T^*Q \ar[d]_{\pi_Q} \ar[rr]^{\Phi_g} &&T^*Q \ar[d]^{\pi_Q} \\
Q \ar[rr]^{\phi_g} &&Q
}
\]

and  $\Phi_g^*(\w_Q)=\w_Q,$  for all $g\in G.$ We ask about  how are these  symplectic actions on $T^*Q$. The following result gives an answer to this question.

\begin{proposition}\label{T2}
Let $\Phi:G\times T^*Q\to T^*Q $ be a symplectic  action of a Lie group $G$ on  $(T^*Q,\w_Q)$ whose projection on $Q$ is the action $\phi:G\times Q\to Q.$  Then, 
there exists a differentiable map  $A:G \times Q \to T^*Q$ such that 
\begin{enumerate}
\item $A$ is fibered on $Q$, i.e. $\pi_Q\circ A=pr_2,$ where $pr_2:G\times Q\to Q$ is the canonical projection on the second factor. 
\item For each $g\in G$, the $1$-form $A_g$ on $Q$ is closed. 
\item The action $\Phi$ is given by 
\begin{equation}\label{PhiA}
\Phi_g=(\lc)_g \circ t_{A(g)}, \mbox{ for all } g \in G.
\end{equation}
\end{enumerate} 
\medskip

In particular,  $\Phi$ is an affine action. Moreover,  $\Phi$ is linear if and only if  $\Phi$ is the cotangent lift $T^*\phi$ of $\phi$.  
\end{proposition}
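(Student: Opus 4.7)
The plan is to reduce Proposition \ref{T2} to the already-proved Proposition \ref{lema} by ``untwisting'' $\Phi_g$ with the cotangent lift $T^*\phi$. Concretely, for each $g\in G$ I would set
$$F_g := (T^*\phi)_{g^{-1}}\circ \Phi_g : T^*Q \longrightarrow T^*Q.$$
Since both $(T^*\phi)_g$ and $\Phi_g$ fiber over $\phi_g$ on $Q$, the composition $F_g$ fibers over $\phi_{g^{-1}}\circ \phi_g = \mathrm{id}_Q$, i.e.\ $F_g$ is a fiber-preserving diffeomorphism over the identity. Moreover, $(T^*\phi)_{g^{-1}}$ is symplectic (being a cotangent lift) and $\Phi_g$ is symplectic by hypothesis, so $F_g^*\w_Q=\w_Q$. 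Proposition \ref{lema} therefore yields a unique closed $1$-form $A(g)\in \Omega^1(Q)$ such that $F_g = t_{A(g)}$, which rearranges to
$$\Phi_g = (T^*\phi)_g\circ t_{A(g)},$$
and gives items (ii) and (iii) as well as the fact that $\pi_Q\circ A = pr_2$ in (i).

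The one point that needs care is the smooth dependence of $A$ on $g$, which isn't immediate from Proposition \ref{lema} because that proposition is stated pointwise in $g$. To settle this I would evaluate the relation $t_{A(g)}=(T^*\phi)_{g^{-1}}\circ \Phi_g$ on the zero section, namely
$$A(g)(q) = t_{A(g)}(0_q) = \bigl((T^*\phi)_{g^{-1}}\circ \Phi_g\bigr)(0_q) \quad \text{for all } (g,q)\in G\times Q.$$
The right-hand side is manifestly smooth in $(g,q)$ because both $\Phi$ and $T^*\phi$ are smooth $G$-actions and the zero section is smooth. Hence $A:G\times Q\to T^*Q$ is differentiable.

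For the last two assertions, the affinity of $\Phi_g$ on each fiber is immediate from (\ref{PhiA}), since $(T^*\phi)_g$ restricts to a linear isomorphism $T^*_qQ\to T^*_{\phi_g(q)}Q$ and $t_{A(g)}$ is a fiber translation. If $\Phi$ equals $T^*\phi$ then uniqueness in Proposition \ref{lema} forces $A(g)\equiv 0$ for every $g$, so $\Phi_g$ is linear on fibers; conversely, if $\Phi_g$ is linear on fibers then $t_{A(g)}=(T^*\phi)_{g^{-1}}\circ \Phi_g$ is linear on each fiber, which forces $A(g)(q)=0$ for every $q$ (a translation is linear only when the translation vector vanishes), and therefore $\Phi=T^*\phi$.

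I don't anticipate a serious obstacle: once one spots the idea of composing with $(T^*\phi)_{g^{-1}}$, everything reduces to Proposition \ref{lema} plus a one-line check of smoothness via the zero section. The most delicate nuance is simply making sure that the pointwise statement in Proposition \ref{lema} gives smooth dependence on the parameter $g$, which is why the evaluation on $0_q$ is the key computational step rather than, say, evaluating at an arbitrary $\gamma_q$ whose smooth choice in $q$ would be an extra task.
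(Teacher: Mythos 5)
Your proposal is correct and follows essentially the same route as the paper: the paper's proof consists precisely of forming $F_g=(T^*\phi)_{g^{-1}}\circ\Phi_g$ and invoking Proposition \ref{lema}. In fact your write-up is more complete, since the paper leaves the smooth dependence of $A$ on $g$ and the final affine/linear assertions unproved, and your zero-section evaluation and translation-is-linear-only-if-zero arguments settle both correctly.
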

\begin{proof}
Let $g$ be an element of the Lie group $G$. The map $F_g=(\lc)_{g^{-1}} \circ \Phi_g$ satisfies the hypothesis of  Proposition \ref{lema}. Thus,  there exists a unique closed 1-form $A_g$ on $Q$ such that $\Phi_g=(\lc)_g \circ t_{A(g)}$.   
\end{proof}

Therefore, if $\Omega^1_c(Q)$ denotes the set of closed $1$-forms on $Q$, every symplectic fiberwise action is affine and it induces a map $A:G\to \Omega_c^1(Q)$ satisfying (\ref{PhiA}) but, does each one of these maps induces an affine symplectic fiberwise action?
 The following result give us the necessary and sufficient conditions on the map $A:G\to \Omega_c^1(Q)$ to ensure that the map $\Phi^A:G\times T^*Q\to T^*Q$ 
  related with $A$ by (\ref{PhiA}) is a symplectic action. Previously, we introduce the following cohomology complex induced by the action $\phi$ (see, for instance, \cite{AI}):
\begin{itemize}
\item A $n$-cochain is a map $A: G \times \stackrel{n}{\dots} \times G \to \Omega^1(Q)$ and $C^n(G,\Omega^1(Q))$ denotes the set of the $n$-cochains. The $0$-cochains are the $1$-forms on $Q$. 
\item The coboundary operator  $\delta_\phi: C^n(G,\Omega^1(Q)) \to C^{n+1}(G,\Omega^1(Q))$ is given by
\begin{eqnarray*}
(\delta_\phi A) (g_1, \dots, g_{n+1}) &=& (-1)^{n+1} A(g_2, \dots, g_{n+1}) + \\
&&+ \sum^n_{i=1} (-1)^{n+i+1} A(g_1, \dots, g_{i-1}, g_i\cdot g_{i+1}, \dots, g_{n+1}) +\\
&&+ \phi^*_{g_{n+1}} (A(g_1, \dots, g_n)). 
\end{eqnarray*}
\end{itemize}
Since the exterior differential is linear and commutes with the pull backthen the sets $C^n(G,\Omega^1_c(Q))$ of the  $n$-cochains with values in the closed 1-forms on $Q$ define a subcomplex of $(C^\bullet(G,\Omega^1(Q)),\delta_\phi)$. We denote by $H^k(G,\phi,\Omega^1_c(Q))$ the corresponding cohomology groups. We will see that the first cohomology group $H^1(G,\phi,\Omega_c^1(Q))$ allows to classify the symplectic  actions on $T^*Q$ which project on $\phi.$

\begin{theorem}\label{T3}
Let $\phi: G \times Q \to Q$ be an action of the Lie group $G$ on a manifold $Q$ and $A$ be a map from $G$ to $\Omega^1(Q)$. Then,  
\begin{enumerate}
\item The map $\Phi^A: G \times T^*Q \to T^*Q$ given by $\Phi_g^A=(\lc)_g \circ t_{A(g)}$ is an action if and only if $A$ is a one-cocycle in the cohomology complex $(C^\bullet(G,\Omega^1(Q)),$ $\delta_\phi)$. 
\item $\Phi^A$  is also symplectic if and only if $A(g)$ is a closed 1-form on $Q,$ for all $g \in G$, i.e. $A$ is a one-cocycle in the cohomology subcomplex $(C^\bullet(G,\Omega^1_c(Q)),\delta_\phi)$.
\end{enumerate}
\end{theorem}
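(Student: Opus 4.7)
The argument is organized around a single commutation identity between fiber translations and cotangent lifts, after which both parts follow by bookkeeping.

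The first step is to establish the commutation rule
\[ t_\alpha \circ (\lc)_g \;=\; (\lc)_g \circ t_{\phi_g^*\alpha}, \qquad \alpha \in \Omega^1(Q),\; g \in G. \]
Evaluating both sides on a covector $\gamma_q \in T^*_q Q$ reduces this identity to the duality relation $(\lc)_g \circ T^*_q\phi_g = \mathrm{id}$ on $T^*_{\phi_g(q)} Q$ together with the fiberwise linearity of $(\lc)_g$. This is the only computation requiring care with the cotangent-lift conventions, and it is the main (and really only) technical point of the proof.

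Granting the rule, for part (1) I would compute
\[ \Phi^A_{g_1} \circ \Phi^A_{g_2} \;=\; (\lc)_{g_1} \circ t_{A(g_1)} \circ (\lc)_{g_2} \circ t_{A(g_2)} \;=\; (\lc)_{g_1 g_2} \circ t_{A(g_2) + \phi_{g_2}^* A(g_1)}, \]
using the commutation rule once and the multiplicativity of $T^*\phi$. Comparing with $\Phi^A_{g_1 g_2} = (\lc)_{g_1 g_2} \circ t_{A(g_1 g_2)}$ and invoking the uniqueness part of Proposition \ref{lema} (so that $t_\beta$ determines $\beta$), the homomorphism condition becomes exactly
\[ A(g_1 g_2) \;=\; A(g_2) + \phi_{g_2}^* A(g_1), \]
which is $(\delta_\phi A)(g_1,g_2) = 0$. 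The identity axiom $\Phi^A_e = \mathrm{id}$ is equivalent to $A(e) = 0$, and this is in turn forced by the cocycle relation (take $g_1 = g_2 = e$). Thus $\Phi^A$ is an action if and only if $A$ is a 1-cocycle, the converse direction being obtained by reading the same computation in reverse.

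Part (2) is then an immediate consequence of (1) together with formula (\ref{t_alfa}). Since each $(\lc)_g$ preserves $\w_Q$,
\[ (\Phi^A_g)^* \w_Q \;=\; t_{A(g)}^* (\lc)_g^* \w_Q \;=\; t_{A(g)}^* \w_Q \;=\; \w_Q - \pi_Q^*(dA(g)), \]
and as $\pi_Q$ is a submersion, the pullback $\pi_Q^*$ is injective on forms; hence $\Phi^A_g$ is symplectic for every $g \in G$ if and only if $dA(g) = 0$ for every $g$, i.e.\ $A$ takes values in $\Omega^1_c(Q)$. The remainder of the argument is purely formal.
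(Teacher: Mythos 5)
Your proof is correct and follows essentially the same route as the paper: your commutation identity $t_\alpha\circ (T^*\phi)_g=(T^*\phi)_g\circ t_{\phi_g^*\alpha}$ is exactly the packaged form of the pointwise computation the paper performs (applying $(T^*\phi)_{(gh)^{-1}}$ and using fiberwise linearity of the cotangent lift), and part (2) is identical, with your explicit appeal to the injectivity of $\pi_Q^*$ on forms being a welcome small clarification. No gaps.
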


\begin{proof}
The left action condition $ \Phi_{gh}^A=\Phi_g^A\circ \Phi_h^A$ is equivalent to the relation
$$ (\lc)_{gh}(A(gh)(q))=(\lc)_{gh}(A(h)(q))+(\lc)_g(A(g)({\phi_h(q)})).$$
If we apply $(\lc)_{(gh)^{-1}}$ in the previous equality, then we have
$$ A(gh)=A(h)+\phi^*_h(A(g)), \qquad \forall \; g,h \in G. $$
Note that this last condition implies that  $A(e)=0$, where $e$ is the identity element  of $G$. 
Therefore, $\Phi^A$ is an action if and only if $A$ is a one-cocycle in the cohomology complex $(C^\bullet(G,\Omega^1(Q)),\delta_\phi).$

\medskip

On the other hand, using (\ref{t_alfa}) and the symplectic character of the cotangent lift action, we deduce
\begin{eqnarray*}
(\Phi_g^A)^*(\w_Q) &=& t_{A(g)}^*((\lc)_g^*\w_Q )= t_{A(g)}^*\w_Q \\
&=& \w_Q-\piq^*(d(A(g))).
\end{eqnarray*} 
Thus, $\Phi_g^A$ is symplectic if and only if $A(g)$ is a closed 1-form on $Q,$ for all $g \in G$.
\end{proof} 

Note that the previous results give us a relation between symplectic actions on $(T^*Q,\w_Q)$  whose projection on $Q$ is $\phi:G\times Q\to Q$ and one-cocycles in the cohomology complex $(C^\bullet(G,\Omega^1_c(Q)),\delta_\phi).$  Using these facts, we will see in the following theorem that the  first cohomology  group $H^1(G,\phi, \Omega^1_c(Q))$ of this complex  allows to give a classification of these symplectic actions on $(T^*Q, \w_Q)$. 

\begin{theorem}\label{T4}
Let $\phi: G \times Q \to Q$ be an action of a  Lie group $G$ on a manifold $Q.$
If $A, B: G \to \Omega^1_c(Q)$ are  two one-cocycles in the cohomology complex $(C^\bullet(G,\Omega_c^1(Q)),\delta_\phi)$, and $\Phi^A$ and $\Phi^B$ their respective affine symplectic actions on $(T^*Q, \w_Q)$, then  exists a symplectomorphism $F:T^*Q\to T^*Q$  such that 
\begin{eqnarray}
\label{F fibrada}          && \piq \circ F= \piq \mbox{ and }\\ 
\label{F equivariante} && F \circ \Phi^A_g=\Phi^B_g \circ F, \mbox{ for all } g \in G 
\end{eqnarray}
if and only if $[A]=[B] \in H^1(G,\phi, \Omega^1_c(Q))$.  
\end{theorem}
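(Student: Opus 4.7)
The plan is to reduce the equivariance condition for a fiber-preserving symplectomorphism between the two actions to the coboundary equation $B - A = \delta_\phi \beta$ in the cohomology complex, so that the symplectomorphism exists precisely when $A$ and $B$ differ by a coboundary.

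First I would apply Proposition \ref{lema}: any diffeomorphism $F:T^*Q\to T^*Q$ satisfying (\ref{F fibrada}) and preserving $\w_Q$ must be of the form $F=t_\beta$ for a unique closed $1$-form $\beta\in\Omega^1_c(Q)$. This converts the search for a symplectomorphism $F$ into the search for a closed $1$-form $\beta$, and reduces the equivariance condition (\ref{F equivariante}) to the identity
$$t_\beta\circ (\lc)_g\circ t_{A(g)} \;=\; (\lc)_g\circ t_{B(g)}\circ t_\beta \qquad \mbox{for all } g\in G.$$

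Next I would unpack both sides by applying them to an arbitrary $\gamma_q\in T^*_qQ$ and exploiting the fact that $(\lc)_g:T^*_qQ\to T^*_{\phi_g(q)}Q$ is linear. Using the elementary identity $(\lc)_g(\alpha(q))=(\phi_{g^{-1}}^*\alpha)(\phi_g(q))$ for a $1$-form $\alpha\in\Omega^1(Q)$, the equivariance condition collapses, after applying $\phi_g^*$, to the pointwise equality of $1$-forms on $Q$:
$$B(g)-A(g) \;=\; \phi_g^*\beta-\beta \qquad \mbox{for all } g\in G.$$
From the formula for $\delta_\phi$ on $0$-cochains given in Section \ref{Section2}, the right-hand side is exactly $(\delta_\phi\beta)(g)$, so the condition becomes $B-A=\delta_\phi\beta$.

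For the sufficiency direction, if $[A]=[B]$ in $H^1(G,\phi,\Omega^1_c(Q))$ one picks a closed primitive $\beta$ with $B-A=\delta_\phi\beta$ and defines $F=t_\beta$; then $F$ is a fiber-preserving symplectomorphism by Proposition \ref{lema} and by reversing the computation above it satisfies (\ref{F equivariante}). For necessity, the closed $1$-form $\beta$ produced by Proposition \ref{lema} is precisely a $0$-cochain in the closed subcomplex whose coboundary is $B-A$, giving $[A]=[B]$. I expect the only delicate point to be keeping track of the direction of pullbacks and of the evaluation points $q$ versus $\phi_g(q)$ when translating the diffeomorphism equation into a relation between $1$-forms on $Q$; everything else is mechanical once Proposition \ref{lema} is in hand.
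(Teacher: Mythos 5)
Your proposal is correct and follows essentially the same route as the paper: apply Proposition \ref{lema} to write $F=t_\beta$ for a closed $1$-form $\beta$, then translate the equivariance condition, via linearity of $(\lc)_g$, into $B(g)-A(g)=\phi_g^*\beta-\beta=(\delta_\phi\beta)(g)$, which is exactly the coboundary relation; the converse is the same computation run backwards. Your sign conventions and evaluation points check out against the paper's formula $(A-B)(g)=\alpha-\phi_g^*\alpha=\delta_\phi(-\alpha)(g)$.
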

\begin{proof}
Suppose that  $F:T^*Q\to T^*Q$ is a symplectomorphism which satisfies  \eqref{F fibrada} and \eqref{F equivariante}. Then, using  Proposition \ref{lema}, we deduce that there exists a closed 1-form $\alpha$ on $Q$ such that $F=t_\alpha$. Since $t_\alpha \circ \Phi^A_g=\Phi^B_g \circ t_\alpha$, we have that
$$(\lc)_g(\gamma_q+A(g)(q)+\alpha({\phi_g(q)}))=(\lc)_g(\gamma_q+B(g)(q)+\alpha(q)),$$
for all $\gamma_q \in T_q^*Q$. Thus, we deduce 
$$(A-B)(g)=\alpha-\phi^*_g\alpha=\delta_\phi(-\alpha)(g),$$
that is, $[A]=[B]$. 

\medskip

Conversely, if $[A]=[B]$, then there exists a closed 1-form $\alpha$ on $Q$ such that $(A-B)(g)=\delta_\phi(\alpha)(g)=\phi^*_g\alpha-\alpha$. Thus, $F=t_{-\alpha}$ satisfies the conditions of the theorem.
\end{proof}

\begin{example}[{\bf The Heisenberg group}] {\rm 
Let $Q={\Bbb R}^3\cong{\Bbb R}^2\times {\Bbb R}$ be 
the Heisenberg group  endowed with the group law 
$$(x,y,t)\cdot (x',y',t')=(x+x',y+ y', t+t'+xy'),\;\;\; \mbox{ for all }(x,y,t),(x',y',t')\in {\Bbb R}^3.$$

We consider the action  $\phi: {\Bbb R}^3\times {\Bbb R}^3\to {\Bbb R}^3$ defined by the group operation. 
The cotangent lift action $T^*\phi$ is 
$$(T^*\phi)_{(x_0,y_0,t_0)}(x,y,t,\alpha_1,\alpha_2,\alpha_3)=(x+x_0,y+y_0,t+t_0+x_0y,\alpha_1,\alpha_2-x_0\alpha_3,\alpha_3).$$
Here we have identified $T^*{\Bbb R}^3\cong {\Bbb R}^6.$  

Let $A:{\Bbb R}^3\to \Omega^1_c({\Bbb R}^3)$ be the one-cocycle 
$$A(x_0,y_0,t_0)=x_0^2dy+2x_0dt \mbox{ for all } (x_0,y_0,t_0)\in {\Bbb R}^3.$$

Note that 
$$A(x_0+x_1,y_0+y_1,t_0+t_1+x_0y_1)=A(x_1,y_1,t_1)+\phi_{(x_1,y_1,t_1)}^*A_{(x_0,y_0,t_0)}.$$

\medskip

Suppose that $A$ is a coboundary  in the  cohomogy complex $(C^\bullet({\Bbb R}^3,\Omega^1_c({\Bbb R}^3)),\delta_\phi)$. Then there exists a closed $1$-form $\alpha=\alpha_1dx + \alpha_2dy + \alpha_3 dt$ such that $A{(x_0,y_0,t_0)}=\phi_{(x_0,y_0,t_0)}^*\alpha-\alpha$ for all $(x_0,y_0,t_0)\in {\Bbb R}^3,$ i.e.
$$
\begin{array}{rcl}
0&=&\alpha_1\circ \phi_{(x_0,y_0,t_0)}-\alpha_1,\\
 x^2_0&=&\alpha_2\circ \phi_{(x_0,y_0,t_0)}+ x_0\alpha_3\circ \phi_{(x_0,y_0,t_0)}-\alpha_2,\\
 2x_0&=&\alpha_3\circ \phi_{(x_0,y_0,t_0)}-\alpha_3.
 \end{array}
 $$

At the point $(0,0,0) \in {\Bbb R}^3$ we have that 
$$
\begin{array}{rcl}
\alpha_1(x_0,y_0,t_0)&=&\alpha_1(0,0,0),\\
\alpha_2(x_0,y_0,t_0)&=&x_0^2 -x_0\alpha_3(0,0,0) +\alpha_2(0,0,0), \\
\alpha_3(x_0,y_0,t_0)&=&2x_0 + \alpha_3(0,0,0).
\end{array}$$

On the other hand, since $\alpha$ is closed 

$$\frac{\partial \alpha_1}{\partial y}- \frac{\partial\alpha_2}{\partial x}=0,$$
that is, $2x_0 -\alpha_3(0,0,0)=0$ which it is not possible. Thus,  $[A]\not=0.$

The new symplectic action on $T^*{\Bbb R}^3$ is 
$$\Phi_{(x_0,y_0,s_0)}(x,y,t,\alpha_1,\alpha_2,\alpha_3)=(x+x_0,y+y_0,t+t_0+x_0y,\alpha_1,\alpha_2-x_0^2-x_0\alpha_3
,\alpha_3+ 2x_0), $$
and $(T^*{\Bbb R}^3,\Phi)$ is not symplectomorphic to $(T^*{\Bbb R}^3,T^*\phi).$}

\end{example}

\section{symplectic lifts of actions on a complete $G$-Lagrangian fibration}\label{Section3}

The cotangent projection $\pi_Q:T^*Q\to Q$ of a manifold $Q$  has a special property: for all $q\in Q$, its  fiber $T_q^*Q$ at $q$  is a Lagrangian submanifold of $T^*Q, $ that is,  $\pi_Q:T^*Q\to Q$ is a Lagrangian fibration.  In this section we will  extend the previous results to this kind of fibrations (with certain topological restrictions). In order to do this, we recall some notions  and properties about Lagrangian fibrations (for more details, see \cite{D}).

 \medskip
 
A fiber bundle $\pi: M \to Q$, with total space a symplectic manifold $(M,\w)$, is called a \textit{Lagrangian fibration} if its fiber $\pi^{-1}(q)$ is a Lagrangian submanifold of $M,$ for all $q \in Q$, that is, 
\begin{equation}\label{w}
\ker T_x\pi=(\ker T_x\pi)^\w,\;\;\;\mbox{ for all } x\in \pi^{-1}(q),
\end{equation}
where $(\ker T_x\pi)^\w=\{v\in T_xM/\w_x(v,u)=0 \mbox{ for all } u\in \ker T_x\pi\}$  is the symplectic orthogonal subspace of $\ker T_x\pi.$ 

\medskip 

Consider  $(M,Q,\pi,\w)$ a Lagrangian fibration. Given a 1-form $\alpha$ on $Q$, denote by  $X_{\pi^*\alpha}$ the vertical vector field on  $M$ which is characterized by the following condition 
\begin{equation}\label{Xpi}
i_{X_{\pi^*\alpha}}\w=\pi^*\alpha.
\end{equation}
Note that (\ref{w}) implies that $X_{\pi^*\alpha}$ is a vertical vector field with respect to $\pi$ (in fact, using (\ref{w}), we deduce that the vertical bundle to $\pi$ is generated by the vector fields $X_{\pi^*\alpha}$, with $\alpha$ a $1$-form on $Q$).
So, if $F_t^{X_{\pi^*\alpha}}:M\to M$ is  the flow of $X_{\pi^*\alpha}$ at the time $t$, then $\pi\circ F_t^{X_{\pi^*(\alpha)}}=\pi$ and one can prove that 
$$\frac{d}{d t}[(F_t^{X_{\pi^*\alpha}})^*(\w)]=(F_t^{X_{\pi^*\alpha}})^*({\mathcal L}_{X_{\pi^*\alpha}}\w)=(F_t^{X_{\pi^*\alpha}})^*(\pi^*(d\alpha))=\pi^*(d\alpha).$$
Therefore, 
\begin{equation}\label{ecuacionclave}(F^{X_{\pi^*\alpha}}_t)^*\w=\w+t\pi^*(d\alpha).\end{equation}

\medskip

We will say that the Lagrangian fibration $(M,Q,\pi,\w)$ is \textit{complete} if the  vector field $X_{\pi^*\alpha}$ is complete,  for all $\alpha\in \Omega^1(Q)$. In such a case, this vector field can be integrated up to time 1 to give the map
\begin{equation}\label{mu}
\begin{array}{rcl}
\mu: \Omega^1(Q) \times M &\to & M \\
(\alpha,x)&\mapsto & F_1^{X_{\pi^*\alpha}}(x).
\end{array}
\end{equation}
In fact, one can check that (see \cite{D}) 
\begin{itemize}
\item $\mu$ is an action: $\mu(\alpha+\beta,x)=\mu(\alpha,\mu(\beta,x)),$
\item $\mu$ is $\pi$-fibered:  $\pi(\mu(\alpha,x))=\pi(x),$
\item $\mu(\alpha,x)=\mu(\beta,x)$ if $\alpha({\pi(x)})=\beta({\pi(x)}),$
\item $\mu$ is transitive on the fibers: for all $q\in Q$,  if $x,x^\prime \in \pi^{-1}(q)$, there exists a $1$-form $\alpha \in \Omega^1(Q)$ such that $\mu(\alpha,x)=x^\prime.$ \end{itemize}

Therefore, for each $q\in Q$, the action $\mu$ induces a transitive action of the abelian group $T^*_qQ$ on the fiber $\pi^{-1}(q)$
\begin{equation}\label{muuu}
\mu_q: T^*_qQ \times \pi^{-1}(q) \to \pi^{-1}(q),\;\;\;\;\;
(\alpha_q,x) \mapsto \mu(\alpha,x),
\end{equation}
where $\alpha$ is a $1$-form on $Q$ such that its value at $q$ is just $\alpha_q$. In general,  the action $\mu_q$ is not free. For this reason we consider  the isotropy subgroup $\Lambda_q$ of $\mu_q$. More explicitly, 
\begin{equation}\label{latice}
\Lambda_q=\{\alpha_q \in T_q^*Q \; | \; \mu_q(\alpha_q,x)=x \quad \forall x \in \pi^{-1}(q)\}.
\end{equation}
It can be checked that $\Lambda_q$ is a discrete subgroup of $T_q^*Q$, $T_q^*Q/\Lambda_q$ is an abelian group and  $\Lambda=\cup_{q\in Q}\Lambda_q$ is a Lagrangian submanifold of $T^*Q$. In addition, we have the corresponding free fibered action 
$$\widehat{\mu}:T^*Q/\Lambda\times M\to M.$$

On the other hand, $\widetilde{\pi}_Q:T^*Q/\Lambda\to Q$ is a Lagrangian fibration with respect the induced  symplectic $2$-form $\widetilde{\omega}_Q$ on the reduced space  $T^*Q/\Lambda$  characterized by 
\begin{equation}\label{pr}
\pr^*\widetilde{\w}_Q=\w_Q,
\end{equation}
where $\pr:T^*Q \to T^*Q/\Lambda$ is the quotient projection. Thus, $(T^*Q/\Lambda,Q,\widetilde{\pi}_Q,\widetilde{\w}_Q)$ is a complete Lagrangian fibration. 
 Furthermore, if the fibers of $\widetilde{\pi}_Q:T^*Q/\Lambda\to Q$ are connected and compact, then they are isomorphic to the $n$-torus (for more details, see \cite{D}). 
\medskip

In the particular case of the Lagrangian fibration $(T^*Q,Q,\pi_Q,\omega_Q),$ the action $(\mu_Q)_q:T_q^*Q\times T_q^*Q\to T^*_qQ$ is the map $(\mu_Q)_q(\alpha_q,\beta_q)=\alpha_q+\beta_q$ and $\Lambda_q=0.$ 
\medskip

For the complete Lagragian fibration $(T^*Q/\Lambda,Q,\widetilde{\pi}_Q,\widetilde{\w}_Q)$ deduced from a complete Lagrangian fibration $(M,Q,\pi,\w),$ the corresponding action is 
\begin{equation}\label{+}(\widetilde{\mu}_Q)_q:T^*_qQ\times T_q^*Q/\Lambda_q\to T_q^*Q/\Lambda_q,\;\;\; (\widetilde{\mu}_Q)_q(\alpha_q,[\beta_q])=[\alpha_q + \beta_q],\end{equation}
and its isotropic subgroup  is just $\Lambda_q.$

Now, we will characterize the  symplectomorphisms on the fiber bundle $\widetilde{\pi}_Q:T^*Q/\Lambda\to Q$. Previously, we recall the notion of  {\it a  Lagrangian section }of  an arbitrary Lagrangian fibration $\pi:(M,\w)\to Q$ like a section $\sigma:Q\to M$ of $\pi$ such that $\sigma^*\w=0.$ In the particular case of the cotangent bundle $\pi_Q:(T^*Q,\w_Q)\to Q,$ a $1$-form $\alpha$ is Lagrangian if and if it is closed, since $\alpha^*\w_Q=-d\alpha.$

\begin{proposition}\label{lemmatilde}
Let  $(T^*Q/\Lambda,Q, \widetilde{\pi}_Q,\widetilde{\w}_Q)$ be the symplectic reference of a complete Lagrangian fibration $(M,Q,\pi,\omega)$. If $\widetilde{F}:T^*Q/\Lambda\to T^*Q/\Lambda$ is a diffeomorphism such that $\widetilde{\pi}_Q\circ \widetilde F=\widetilde{\pi}_Q$ then the following statements  are equivalent: 
\begin{enumerate}
\item $\widetilde{F}$ is a symplectomorphism, that is, $\widetilde{F}^*\widetilde{\w}_Q=\widetilde{\w}_Q.$ 
\item There exists a Lagrangian  section $\widetilde\sigma:Q\to T^*Q/\Lambda$ of $\widetilde\pi_Q$  such that $ \widetilde{F}=t_{\widetilde\sigma}$, where $t_{\widetilde\sigma}:T^*Q/\Lambda\to T^*Q/\Lambda$ is the map given by 
\begin{equation}\label{tras}
t_{\widetilde\sigma}([\gamma_q] )=[\gamma_q] + {\widetilde\sigma}(q),
\end{equation}
 for all $q\in Q.$ 
\end{enumerate}
\end{proposition}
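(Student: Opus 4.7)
The strategy is to mirror the proof of Proposition~\ref{lema} and transfer its conclusions to the quotient by lifting locally through the covering $\pr: T^*Q\to T^*Q/\Lambda$, exploiting the relation $\pr^*\widetilde{\omega}_Q = \omega_Q$ from \eqref{pr}. The essential fiberwise tool is the free and transitive abelian action $(\widetilde{\mu}_Q)_q$ of $T_q^*Q/\Lambda_q$ on $\widetilde{\pi}_Q^{-1}(q)$ described in \eqref{+}, which allows ``subtraction'' of points in a common fiber.

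\medskip

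For the implication (ii)$\Rightarrow$(i), I would cover $Q$ by contractible open sets $U$ on each of which $\widetilde{\sigma}|_U$ lifts through $\pr$ to an honest $1$-form $\alpha\in\Omega^1(U)$; such a lift is automatically a section of $\pi_Q$ since $\pi_Q=\widetilde{\pi}_Q\circ\pr$. The Lagrangian hypothesis $\widetilde{\sigma}^*\widetilde{\omega}_Q=0$ translates into $\alpha^*\omega_Q=-d\alpha=0$, so $\alpha$ is closed. By \eqref{t_alfa} the local translation $t_\alpha$ on $T^*U$ is then symplectic, and since $\pr\circ t_\alpha=t_{\widetilde\sigma}\circ\pr$, the identity $\pr^*\widetilde{\omega}_Q=\omega_Q$ propagates symplecticity to $t_{\widetilde\sigma}|_{\widetilde{\pi}_Q^{-1}(U)}$; covering $Q$ finishes this direction.

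\medskip

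For the converse (i)$\Rightarrow$(ii), I would define $\widetilde{\sigma}(q)\in T_q^*Q/\Lambda_q$ as the unique element with $\widetilde{F}([\gamma_q])=[\gamma_q]+\widetilde{\sigma}(q)$ for some chosen $[\gamma_q]$; existence and uniqueness follow from the freeness and transitivity of $(\widetilde{\mu}_Q)_q$. The crucial step is independence of the choice of $[\gamma_q]$, which I would settle by transposing the argument of Proposition~\ref{lema}: the vertical bundle of $\widetilde{\pi}_Q$ is spanned by the vector fields $X_{\widetilde{\pi}_Q^*\lambda}$ of \eqref{Xpi}, and symplecticity of $\widetilde{F}$ combined with $\widetilde{\pi}_Q\circ\widetilde{F}=\widetilde{\pi}_Q$ forces $T\widetilde{F}(X_{\widetilde{\pi}_Q^*\lambda})=X_{\widetilde{\pi}_Q^*\lambda}\circ\widetilde{F}$. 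Because the time-$1$ flow of $X_{\widetilde{\pi}_Q^*\lambda}$ is by construction the fiber translation $[\gamma_q]\mapsto[\gamma_q+\lambda(q)]$, this intertwining becomes $\widetilde{F}([\gamma_q]+[\alpha_q])=\widetilde{F}([\gamma_q])+[\alpha_q]$, which simultaneously establishes that $\widetilde{\sigma}$ is well defined and that $\widetilde{F}=t_{\widetilde\sigma}$. Smoothness of $\widetilde{\sigma}$ follows by picking local smooth sections of $\widetilde{\pi}_Q$.

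\medskip

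It remains to check that $\widetilde{\sigma}$ is Lagrangian. On a contractible $U\subset Q$ I would again lift $\widetilde{\sigma}|_U$ to $\alpha\in\Omega^1(U)$, so that $\pr\circ t_\alpha=\widetilde{F}\circ\pr$ on $T^*U$. Using \eqref{t_alfa}, \eqref{pr} and the hypothesis $\widetilde{F}^*\widetilde{\omega}_Q=\widetilde{\omega}_Q$,
\begin{equation*}
\omega_Q=\pr^*\widetilde{\omega}_Q=(\widetilde{F}\circ\pr)^*\widetilde{\omega}_Q=(\pr\circ t_\alpha)^*\widetilde{\omega}_Q=t_\alpha^*\omega_Q=\omega_Q-\pi_Q^*(d\alpha),
\end{equation*}
whence $d\alpha=0$, and therefore $\widetilde{\sigma}^*\widetilde{\omega}_Q|_U=\alpha^*\omega_Q=-d\alpha=0$. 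The main obstacle is the well-definedness step of the hard direction: extracting the fiberwise translation structure of $\widetilde{F}$ from the abstract symplectic condition alone. This is precisely the reduced-space counterpart of the vertical-equivariance argument in Proposition~\ref{lema}, and it relies crucially on the fact that the flow of $X_{\widetilde{\pi}_Q^*\lambda}$ realizes the abelian action $(\widetilde{\mu}_Q)_q$.
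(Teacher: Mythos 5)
Your proof is correct and follows essentially the same route as the paper: local lifting of $\widetilde\sigma$ through the covering $\pr$ combined with $\pr^*\widetilde{\w}_Q=\w_Q$ for (ii)$\Rightarrow$(i), and the intertwining $T\widetilde F(X_{\widetilde\pi_Q^*\lambda})=X_{\widetilde\pi_Q^*\lambda}\circ\widetilde F$ obtained from nondegeneracy for the converse. Your only departures are minor improvements: you integrate that intertwining to the time-$1$ flow, getting well-definedness of $\widetilde\sigma$ and $\widetilde F=t_{\widetilde\sigma}$ in one stroke (the paper instead tests the difference vector on functions pulled back from fiberwise linear functions on $T^*Q$), and you explicitly verify that $\widetilde\sigma$ is Lagrangian in the converse direction, a point the paper leaves implicit.
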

\begin{proof}
Suppose that $\widetilde{F}=t_{\widetilde\sigma}$, with ${\widetilde\sigma}$ a section  of $\widetilde{\pi}_Q:T^*Q/\Lambda\to Q.$ Then, 
\begin{equation}\label{tras}
t_{{\widetilde\sigma}}^*\widetilde{\omega}_Q=\widetilde{\omega}_Q+ \widetilde\pi_Q^*({\widetilde\sigma}^*\widetilde{\omega}_Q).
\end{equation}
In fact, for all $\gamma_q\in T_q^*Q,$ there exist a neighborhood $U$ of $q$ and a $1$-form $\alpha:U\to T^*U$ on $U$ such that $\alpha(q)=\gamma_q$ and 
\begin{equation}\label{U}pr\circ \alpha ={\widetilde\sigma}_{|U},\end{equation}
where $pr:T^*U\to T^*U/\Lambda_{|T^*U}$ is the corresponding projection. 
Then,  we have that 
\begin{equation}\label{UU}
pr\circ t_{\alpha}=t_{{\widetilde\sigma}_{|U}}\circ pr.
\end{equation}

On the other hand,  using (\ref{t_alfa}) and (\ref{U}), it follows that 
$$t_{\alpha}^*\omega_Q=\omega_Q- \pi_Q^*(d\alpha)=pr^*(\widetilde\omega_Q +\widetilde\pi_Q^*(\widetilde\sigma^*\widetilde\omega_Q)).$$
From this equality, (\ref{pr}) and (\ref{UU}), we deduce that  (\ref{tras}) holds. Therefore, if  in addition,  ${\widetilde\sigma}$ is Lagrangian,  $\widetilde F=t_{\widetilde\sigma}$ is symplectic. 

\medskip

Conversely, if $\widetilde{F}:T^*Q/\Lambda\to T^*Q/\Lambda$ is a symplectomorphism,  we define the section ${\widetilde\sigma}:Q\to T^*Q/\Lambda$ of $\widetilde{\pi}_Q:T^*Q/\Lambda \to Q$ by 
$${\widetilde\sigma}(q)=\widetilde{F}([\gamma_q])-[\gamma_q],\;\;\;\; \mbox{ with } \gamma_q\in T_q^*Q \mbox{ and }q\in Q.$$ 
In order to prove that $\widetilde\sigma$ is well defined, we will follow  the proof of Proposition \ref{lema}. So, we will show that for each $q\in Q$, the section $\widetilde{\sigma}$ doesn't depend of the chosen element $[\gamma_q]\in T_q^*Q/\Lambda_q, $  or equivalently, 
$$T_{[\gamma_q]}(\widetilde{F}-id_{T^*Q/\Lambda})(v_{[\gamma_q]})=0 \mbox{ for all $v_{[\gamma_q]}\in \ker T_{[\gamma_q]}\widetilde\pi_Q$}.$$
Let $\lambda$ be a $1$-form on $Q$ and $X_{\widetilde\pi_Q^*\lambda}$ the vertical vector field on $T^*Q/\Lambda$ of $\lambda$ with respect to $\widetilde\pi_Q,$ i.e.
\begin{equation}\label{tildelambda}
i_{X_{\widetilde\pi_Q^*\lambda}}\widetilde{\w}_Q=\widetilde\pi_Q^*\lambda.
\end{equation}
Note that 
\begin{equation}\label{lambdalambda}
X_{\widetilde\pi_Q^*\lambda}([\gamma_q])=T_{\gamma_q} pr(X_{\pi_Q^*\lambda}(\gamma_q)), \mbox{ with } \gamma_q\in T^*_qQ,
\end{equation} where $X_{\pi_Q^*\lambda}(\gamma_q)$ is the vertical lift on $T^*Q$ of $\lambda$ with respect to $\pi_Q:T^*Q\to Q$ (see (\ref{lambda^v})). In fact, using (\ref{lambda^v}), (\ref{pr}), (\ref{tildelambda}) and the fact that  $\widetilde{\pi}_Q\circ pr=\pi_Q$, we have
$$i_{X_{\pi_Q^*\lambda}}(pr^*\widetilde\omega_Q)=pr^*(i_{X_{\widetilde\pi_Q^*\lambda}}\widetilde\w_Q).$$
So, from the non-degeneration of $\widetilde\w_Q,$ we deduce (\ref{lambdalambda}). It is clear that (\ref{lambdalambda}) implies that  the vertical vectors $X_{\widetilde\pi_Q^*\lambda}([\gamma_q]),$ with $\lambda\in \Omega^1(Q),$ generate the subspace $\ker T_{[\gamma_q]}\widetilde\pi_Q.$

Now, using that $\widetilde{F}$ is symplectic and the fact that $\widetilde{\pi}_Q\circ \widetilde{F}=\widetilde{\pi}_Q,$ we deduce 
\begin{eqnarray*}
i_{X_{\widetilde\pi_Q^*\lambda}}\widetilde{F}^*(\widetilde\w_Q)=i_{X_{\widetilde\pi_Q^*\lambda}}\widetilde\omega_Q=\widetilde\pi_Q^*\lambda =(\widetilde\pi_Q\circ \widetilde{F})^*\lambda=\widetilde{F}^*(\widetilde\pi^*_Q\lambda)=\widetilde{F}^*(i_{X_{\widetilde\pi_Q^*\lambda}}\widetilde\omega_Q).
\end{eqnarray*}
Then, again from the non-degeneration of $\widetilde{\w}_Q$, 
$$ T_{[\gamma_q]}\widetilde{F}(X_{\widetilde\pi_Q^*\lambda}([\gamma_q]))=X_{\widetilde\pi_Q^*\lambda}(\widetilde{F}([\gamma_q])).$$
Therefore, 
\begin{equation}\label{v}
T_{[\gamma_q]}(\widetilde{F}-id_{T^*Q/\Lambda})(X_{\widetilde\pi_Q^*\lambda}({[\gamma_q]}))(\widetilde{f})= X_{\widetilde\pi_Q^*\lambda}({\widetilde{F}([\gamma_q])})(\widetilde{f})-X_{\widetilde\pi_Q^*\lambda}([{\gamma_q]})(\widetilde{f})=0, 
\end{equation}
for all function $\widetilde{f}:T^*Q/\Lambda\to \R$ such that $f= \widetilde{f}\circ pr$ is a linear function on $T^*Q.$ Indeed, if $Y$ is the vector field on $Q$ associated with $f$ defined  in (\ref{Y}), then,  using (\ref{lambdalambda}), we deduce that 

$$X_{\widetilde\pi_Q^*\lambda}([\gamma_q])(\widetilde{f})=<\lambda(q),Y(q)>= X_{\widetilde\pi_Q^*\lambda}(\widetilde{F}([\gamma_q]))(\widetilde{f}).$$

On the other hand, since $\widetilde\pi_Q\circ \widetilde{F}=\widetilde{\pi}_Q$, then $T_{[\gamma_q]}(\widetilde{F}-Id_{T^*Q/\Lambda})(X_{\widetilde\pi_Q^*\lambda}([\gamma_q])$ is a vertical vector with respect $\widetilde\pi_Q$. In such a case, there is a vertical vector $v_{\gamma_q}\in \ker T_{\gamma_q}\pi_Q$ such that 
$$
T_{[\gamma_q]}(\widetilde{F}-Id_{T^*Q/\Lambda})(X_{\widetilde\pi_Q^*\lambda}([\gamma_q]))=T_{\gamma_q}pr(v_{\gamma_q}).$$
From (\ref{v}), we obtain that $v_{\gamma_q}(f)=0$ for all fiberwise linear function $f:T^*Q\to \R$, which implies that $v_{\gamma_q}=0$ and,  in consequence,  $T_{[\gamma_q]}(\widetilde{F}-Id_{T^*Q/\Lambda})(X_{\widetilde\pi_Q^*\lambda}([\gamma_q]))=0.$

\end{proof}

If the complete  Lagrangian fibration $(M,Q,\pi,\w)$ has a global section $\sigma: Q \to M$ then,  using the action $\mu, $ we can build the following  fiber bundle isomorphism from the canonical fibration  $\widetilde{\pi}_Q:T^*Q/\Lambda \to Q$ to the Lagrangian fibration $\pi:M\to Q$ given by 
\begin{equation}\label{modelo}
\begin{array}{rcl}
\varphi_\sigma:T^*Q/\Lambda &\to & M \\
\left[ \alpha_q \right]&\mapsto & \mu_q(\alpha_q,\sigma(q)). 
\end{array}
\end{equation}
This map is equivariant when we consider  the additive action from $T^*Q$ over $T^*Q/\Lambda$ and the action $\mu$ on $M.$  Moreover, using (\ref{ecuacionclave}),   one can prove that   (see \cite{D})
\begin{equation}\label{simplec}
\varphi^*_\sigma{\w}=\widetilde{\w}_Q + \widetilde{\pi}_Q^*(\sigma^*\w).
\end{equation}
Then, if $\sigma$ is  Lagrangian,  $\varphi_\sigma$ is a symplectomorphism between  $(T^*Q/\Lambda,Q, \widetilde{\pi}_Q, \widetilde{\w}_Q)$ and  $(M,Q,$ $\pi,\w).$

\medskip

The existence of a global Lagrangian section depends only on the triviality of Chern class of the fiber bundle $\pi: M \to B$. In fact, in \cite{D} it is proved the following result. 

\begin{theorem}
The following statements are equivalent: 
\begin{enumerate}
\item There exists a (symplectic) fiber bundle isomorphism between  $M$ and  $T^*Q/\Lambda.$ 
\item There exists a global (Lagrangian) section $\sigma: Q\to M$ of the fiber bundle $\pi:M\to Q.$ 
\item The Chern class of the fiber bundle $\pi:M\to Q$ is null (and $\sigma^*\w$ is an exact $2$-form on $Q$).
\end{enumerate}
\end{theorem}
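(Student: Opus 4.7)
The plan is to prove the chain of equivalences $(2)\Rightarrow(1)$, $(1)\Rightarrow(2)$, and then $(2)\Leftrightarrow(3)$, handling in each step both the topological assertion and the symplectic/Lagrangian refinement in parentheses. The entire argument is essentially a repeated application of the already-developed isomorphism $\varphi_\sigma$ from \eqref{modelo} together with the pull-back formula \eqref{simplec}.

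For $(2)\Rightarrow(1)$, given a global section $\sigma:Q\to M$, the map $\varphi_\sigma:T^*Q/\Lambda\to M$ defined in \eqref{modelo} is a fibered bundle isomorphism: it is smooth, commutes with $\widetilde\pi_Q$ and $\pi$, and is a bijection because $\widehat\mu$ acts freely and transitively on fibers. If in addition $\sigma$ is Lagrangian, then $\sigma^*\w=0$ and \eqref{simplec} reduces to $\varphi_\sigma^*\w=\widetilde\w_Q$, so $\varphi_\sigma$ is a symplectomorphism. Conversely, for $(1)\Rightarrow(2)$, given any fiber bundle isomorphism $\varphi:T^*Q/\Lambda\to M$, take the canonical zero section $\widetilde 0:q\mapsto[0_q]$ of $\widetilde\pi_Q$ and set $\sigma:=\varphi\circ\widetilde 0$. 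A short check using $pr\circ 0_{T^*Q}=\widetilde 0$ and the tautological identity $\alpha^*\theta_Q=\alpha$ (which gives $0_{T^*Q}^*\w_Q=0$) shows $\widetilde 0^*\widetilde\w_Q=0$, so if $\varphi$ is symplectic then $\sigma^*\w=0$, i.e.\ $\sigma$ is Lagrangian.

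For the equivalence $(2)\Leftrightarrow(3)$, the topological half is the classical statement of obstruction theory: $\pi:M\to Q$ is a principal bundle for the group bundle $T^*Q/\Lambda\to Q$ via the free action $\widehat\mu$, and the Chern class (the obstruction class in the appropriate \v{C}ech cohomology with coefficients in the sheaf of sections of $\Lambda$) vanishes precisely when $\pi$ admits a global section. So $(2)\Rightarrow(3)$ is immediate, and $(3)\Rightarrow(2)$ yields a (not necessarily Lagrangian) section $\sigma_0$. For the parenthetical refinement, if $\sigma_0$ is Lagrangian then $\sigma_0^*\w=0$ is trivially exact; conversely, assume $\sigma_0^*\w=d\beta$ for some $\beta\in\Omega^1(Q)$, and define $\widetilde s_\beta:=pr\circ\beta:Q\to T^*Q/\Lambda$ and $\sigma:=\varphi_{\sigma_0}\circ \widetilde s_\beta$. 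Then, by \eqref{simplec},
\begin{equation*}
\sigma^*\w=\widetilde s_\beta^*\bigl(\widetilde\w_Q+\widetilde\pi_Q^*(\sigma_0^*\w)\bigr)=\widetilde s_\beta^*\widetilde\w_Q+d\beta,
\end{equation*}
and the identity $pr^*\widetilde\w_Q=\w_Q$ together with $\beta^*\w_Q=-d\beta$ gives $\widetilde s_\beta^*\widetilde\w_Q=-d\beta$, so $\sigma^*\w=0$. Hence $\sigma$ is a global Lagrangian section.

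The only step that is not purely self-contained is the obstruction-theoretic equivalence between the vanishing of the Chern class and the existence of a global section for the principal $T^*Q/\Lambda$-bundle $\pi:M\to Q$; this is the point where I would cite \cite{D} (or standard references on torus bundles) rather than develop the machinery here. Everything else reduces to the translation trick, exactly parallel to Proposition \ref{lemmatilde}, combined with the transformation law \eqref{simplec} for $\varphi_\sigma^*\w$.
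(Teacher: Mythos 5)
The paper does not prove this theorem at all: it is stated as a quoted result from Duistermaat \cite{D} (``in \cite{D} it is proved the following result''), so there is no internal proof to compare against. Your reconstruction is correct and is the natural one given the machinery the paper has already set up. The equivalence $(1)\Leftrightarrow(2)$ via $\varphi_\sigma$ and the transformation law \eqref{simplec} is exactly right, including the symplectic/Lagrangian refinements: $\varphi_\sigma^*\w=\widetilde\w_Q+\widetilde\pi_Q^*(\sigma^*\w)$ kills the second term when $\sigma$ is Lagrangian, and pushing the zero section $\widetilde 0=pr\circ 0_{T^*Q}$ through a symplectic isomorphism gives a Lagrangian section since $\widetilde 0^*\widetilde\w_Q=0_{T^*Q}^*\w_Q=0$. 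The correction trick in $(3)\Rightarrow(2)$ --- replacing $\sigma_0$ with $\varphi_{\sigma_0}\circ(pr\circ\beta)$ when $\sigma_0^*\w=d\beta$, using $\beta^*\w_Q=-d\beta$ to cancel the exact term --- is precisely how Duistermaat removes the symplectic obstruction, and your computation of $\widetilde s_\beta^*\widetilde\w_Q=-d\beta$ is correct. You are also right that the only genuinely external input is the identification of the Chern class with the obstruction to a global section of the principal $T^*Q/\Lambda$-bundle; since in \cite{D} the Chern class \emph{is defined} as that obstruction cocycle (in \v{C}ech cohomology with coefficients in the sheaf of sections of $\Lambda$, twisted by the monodromy), citing it there is not a gap but the appropriate division of labour --- it is where the whole content of $(2)\Leftrightarrow(3)$ lives. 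One small point you could make explicit: the exactness of $\sigma^*\w$ in $(3)$ should not depend on the choice of section $\sigma$ (two sections differ by a section $s$ of $T^*Q/\Lambda$, and the resulting change in $[\sigma^*\w]$ is controlled by $s$), but the theorem as stated is equally loose on this, so it does not affect the correctness of your argument.
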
 

These results justify that the complete Lagrangian fibration $(T^*Q/\Lambda,Q,\widetilde\pi_Q,\widetilde{\w}_Q)$ is called {\it the symplectic reference} or {\it Jacobian Lagrangian fibration} associated to the complete Lagrangian fibration $(M,Q,\pi,\w).$

\medskip

Using Proposition \ref{lemmatilde} for the symplectic reference of a complete Lagrangian fibration $(M,Q,\pi, \w),$ we deduce the corresponding result

\begin{proposition}\label{lemma2}
Let $\widehat{F}:M\to M$ be diffeomorphism on a complete Lagrangian fibration  $(M,Q,\pi, \w)$ such that ${\pi}\circ \widehat{F}={\pi}$. Then, the following statements  are equivalent: 
\begin{enumerate}
\item $\widehat{F}$ is a symplectomorphism, that is, $\widehat{F}^*{\w}={\w}.$ 
\item There exists a unique Lagrangian  section ${\widetilde\sigma}:Q\to T^*Q/\Lambda$ of the corresponding symplectic reference $(T^*Q/\Lambda,Q,\widetilde\pi_Q,\widetilde{\w}_Q)$ of $(M,Q,\pi, \w)$
  such that 
\begin{equation}\label{tras2}
\widehat{F}(x)=\widehat{\mu}({\widetilde\sigma}(\pi(x)),x),\mbox{ for all }  x\in M, 
\end{equation}
where $\widehat{\mu}:T^*Q/\Lambda \times M \to M$ is the free fibered action deduced from $\mu:T^*Q\times M\to M$ given in  (\ref{muuu}). 
\end{enumerate}
\end{proposition}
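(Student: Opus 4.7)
The plan is to reduce Proposition \ref{lemma2} to Proposition \ref{lemmatilde} by transferring $\widehat F$ to a fiber-preserving symplectomorphism of the symplectic reference $(T^*Q/\Lambda,\widetilde\omega_Q)$ via local Lagrangian sections of $\pi$, while exploiting the fact that any fiber-preserving symplectomorphism must intertwine the action $\widehat\mu$.

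The first step is to prove this equivariance: if $\widehat F$ is a fiber-preserving symplectomorphism, then $\widehat F(\widehat\mu([\alpha_q],x))=\widehat\mu([\alpha_q],\widehat F(x))$ for every $[\alpha_q]\in T_q^*Q/\Lambda_q$ and $x\in \pi^{-1}(q)$. Indeed, $\pi\circ \widehat F=\pi$ gives $\widehat F^*(\pi^*\alpha)=\pi^*\alpha$ for every $\alpha\in \Omega^1(Q)$, and combined with $\widehat F^*\omega=\omega$ and the defining relation \eqref{Xpi}, the non-degeneracy of $\omega$ yields $\widehat F_*X_{\pi^*\alpha}=X_{\pi^*\alpha}$. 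Hence $\widehat F$ preserves the flow of each $X_{\pi^*\alpha}$, so it commutes with $\mu$ and therefore with $\widehat\mu$. Since $\widehat\mu_q$ is a free and transitive action of $T_q^*Q/\Lambda_q$ on $\pi^{-1}(q)$, this equivariance produces, for each $q\in Q$, a \emph{unique} element $\widetilde\sigma(q)\in T_q^*Q/\Lambda_q$ such that $\widehat F(x)=\widehat\mu(\widetilde\sigma(q),x)$ for every $x\in \pi^{-1}(q)$; this already gives the uniqueness asserted in the proposition.

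Next I would establish smoothness and the Lagrangian property of $\widetilde\sigma$ by localizing. Around any $q_0\in Q$ choose a contractible open set $U$ admitting a local Lagrangian section $\sigma_U\colon U\to M$ of $\pi$ (a standard fact for complete Lagrangian fibrations, obtainable by combining a smooth local section with the freedom provided by the $\Omega^1(U)$-action $\mu$). By \eqref{simplec}, $\varphi_{\sigma_U}$ is a fiber-preserving symplectomorphism from $\widetilde\pi_Q^{-1}(U)$ onto $\pi^{-1}(U)$. Conjugating $\widehat F|_{\pi^{-1}(U)}$ through $\varphi_{\sigma_U}$ produces a fiber-preserving symplectomorphism $\widetilde F_U$ of $\widetilde\pi_Q^{-1}(U)$, to which Proposition \ref{lemmatilde} applies, yielding a Lagrangian section $\widetilde\sigma_U$ of $\widetilde\pi_Q|_U$ with $\widetilde F_U=t_{\widetilde\sigma_U}$. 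A short computation using $\varphi_{\sigma_U}([\alpha_q])=\widehat\mu([\alpha_q],\sigma_U(q))$ together with the equivariance above identifies $\widetilde\sigma_U$ with the restriction $\widetilde\sigma|_U$, so the globally defined $\widetilde\sigma$ is automatically smooth and Lagrangian in a neighborhood of every point.

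The converse direction is then immediate: given any Lagrangian section $\widetilde\sigma$, the map $\widehat F$ defined by \eqref{tras2} is fiber-preserving by construction, and through $\varphi_{\sigma_U}$ it corresponds locally to $t_{\widetilde\sigma|_U}$, which is symplectic by Proposition \ref{lemmatilde}; since being symplectic is a local property, $\widehat F$ is symplectic on all of $M$. I expect the main technical obstacle to be the careful bookkeeping needed to identify the local pieces $\widetilde\sigma_U$ coming from Proposition \ref{lemmatilde} with the globally defined $\widetilde\sigma$, together with the (standard but non-trivial) appeal to the existence of local Lagrangian sections of $\pi$.
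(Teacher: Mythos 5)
Your proof is correct, and its backbone is the same as the paper's: conjugate $\widehat F$ through the local isomorphisms $\varphi_\sigma$ of \eqref{modelo} and invoke Proposition \ref{lemmatilde} on the symplectic reference. There are two genuine variations worth noting. First, you begin by showing that any fiber-preserving symplectomorphism satisfies $\widehat F_*X_{\pi^*\alpha}=X_{\pi^*\alpha}$ (from \eqref{Xpi}, $\pi\circ\widehat F=\pi$ and non-degeneracy of $\omega$), hence commutes with the flows defining $\mu$ and therefore with $\widehat\mu$; freeness and transitivity of $\widehat\mu_q$ then give, fiber by fiber, a well-defined and unique $\widetilde\sigma(q)$. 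The paper instead defines $S:M\to T^*Q/\Lambda$ pointwise and only obtains its constancy on fibers (and the uniqueness of $\widetilde\sigma$) as a by-product of the local reduction to Proposition \ref{lemmatilde}; your equivariance argument settles well-definedness and uniqueness globally and upfront, leaving the localization to do only the smoothness and Lagrangian-property work. Second, you trivialize with local \emph{Lagrangian} sections $\sigma_U$ of $\pi$, so that $\varphi_{\sigma_U}$ is an honest symplectomorphism by \eqref{simplec}; the paper uses arbitrary local sections and carries the correction term $\widetilde\pi_Q^*(\sigma^*\omega)$, which cancels under conjugation because $\widetilde F$ covers the identity. Your route buys cleaner statements at the cost of the (standard, and correctly sketched via \eqref{ecuacionclave} on a contractible $U$) existence of local Lagrangian sections. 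Both directions of the equivalence, and the identification $\widetilde\sigma_U=\widetilde\sigma|_U$, are handled soundly.
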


\begin{proof}
Suppose that $(ii)$ holds. Then we will prove 
\begin{equation}\label{Fw}
\widehat{F}^*\w=\w+\pi^*(\widetilde\sigma^*\widetilde{\w}_Q).
\end{equation}
In such a case, since $\widetilde\sigma^*\widetilde{\w}_Q=0,$ then $\widehat F$ is symplectic. 

Let $x$ be an arbitrary point of $M$. Then,  there exists an open neighborhood $U$ of $q=\pi(x)$ and a local section $\sigma:U\to \pi^{-1}(U)$ of $\pi$ such that the map $\varphi_\sigma:T^*U/\Lambda_{|U}\to \pi^{-1}(U)$ defined as in  (\ref{modelo}) is a fiber bundle isomorphism and 

\begin{equation}\label{varphi_sigma}
\varphi_\sigma^*{\omega}= \widetilde{\omega}_Q + \widetilde{\pi}_Q^*(\sigma^*\w), 
\end{equation}
(see (\ref{simplec})). Denote by $\widetilde{F}=\varphi_{\sigma}^{-1}\circ \widehat{F}\circ \varphi_\sigma$. Then, for all $q'\in U$ and $\alpha_{q'}\in T^*_{q'}U,$

$$\begin{array}{rcl}
\widehat{\mu}(\widetilde{F}([\alpha_{q'}]),\sigma({q'}))&=&\varphi_\sigma(\widetilde{F}([\alpha_{q'}]))=
\widehat{F}(\varphi_\sigma([\alpha_{q'}]))\\[5pt]&=& \widehat{\mu}(\widetilde{\sigma}(\pi(\varphi_\sigma([\alpha_{q'}]))), \varphi_\sigma([\alpha_{q'}]))\\[5pt] &=&  \widehat{\mu}(\widetilde\sigma({q'}), \widehat\mu([\alpha_{q'}], \sigma({q'}))=
\widehat{\mu}(\widetilde\sigma(q')+ [\alpha_{q'}],\sigma(q')).
\end{array}
$$
Now, from the free character of the action $\widehat{\mu}$ we deduce that   $\widetilde{F}=t_{\widetilde\sigma_{|U}}$ and consequently,  using (\ref{tras}) and (\ref{varphi_sigma}), we have that 
$$\begin{array}{rcl}
\widehat{F}^*(\w)&=&(\varphi_\sigma^*)^{-1}(\widetilde{F}^*(\varphi_\sigma^*\w))=(\varphi_\sigma^*)^{-1}(\widetilde{F}^*(\widetilde{\w}_Q + \widetilde{\pi}_Q^*(\sigma^*\w))\\[8pt]
&=&(\varphi_\sigma^*)^{-1}(\widetilde{\w}_Q + \widetilde{\pi}_Q^*(\sigma^*\w)+ \widetilde{\pi}_Q^*(\widetilde\sigma^*\widetilde{\w}_Q))=\w + \pi^*(\widetilde\sigma^*\widetilde{\w}_Q).
\end{array}$$
Thus, (\ref{Fw}) holds. 

\medskip

Conversely, if $\widehat{F}:M\to M$ is a symplectomorphism,  with $\pi\circ \widehat{F}=\pi,$  we consider the map $S:M\to T^*Q/\Lambda$ characterized  by 
$$\widehat{F}(x)=\widehat{\mu}(S(x), x).$$
In the following, we will prove that $S$ is a map which is constant into $\pi^{-1}(q).$  In fact, if $x\in \pi^{-1}(q),$ there exist an open neighborhood $U$ of $q$ and a local section $\sigma:U\to \pi^{-1}(U)$ of $\pi$ such that the map $\varphi_\sigma:T^*U/\Lambda_{|U}\to \pi^{-1}(U)$ defined by (\ref{modelo}) is a fiber bundle isomorphism and 
$$\varphi_\sigma^*{\w}= \widetilde{\omega}_Q + \widetilde{\pi}_Q^*(\sigma^*\w).$$ Then, using this relation and the fact that  $\widehat{F}$ is symplectic,  we obtain that 
 the map $\widetilde{F}=\varphi_{\sigma}^{-1}\circ \widehat{F}\circ \varphi_\sigma$ is symplectic and $\widetilde{\pi}_Q\circ \widetilde{F}=\widetilde{\pi}_Q.$ Now, from Proposition \ref{lemmatilde}, there exists a Lagrangian section $\widetilde{\sigma}:U\to T^*U/\Lambda_{|U}$ of $\widetilde{\pi}_Q$ such that $\widetilde{F}=t_{\widetilde{\sigma}}$. In fact, $\varphi_\sigma\circ \widetilde\sigma=\sigma.$ 
 With a  direct computation, we show that 
$$
\begin{array}{rcl}
\widehat{\mu}(S(y),y)&=&\widehat{F}(y)=\varphi_{\sigma}(\widetilde{F}(\varphi_\sigma^{-1}(y)))=\widehat{\mu}(\widetilde{\sigma}(\pi(y)), \widehat\mu(\varphi^{-1}_\sigma(y),\sigma(\pi(y))))\\&=&\widehat{\mu}(\widetilde{\sigma}(\pi(y)), \varphi_\sigma(\varphi^{-1}_\sigma(y)))=
\widehat{\mu}(\widetilde{\sigma}(\pi(y)), y),\end{array}$$

for all $y\in \pi^{-1}(U)$.  
Thus, using the free character of the action $\widehat{\mu},$ we deduce that $S_{|U}=\widetilde{\sigma}\circ \pi$ on  $U.$ 

Since $S$ is globally  defined, this proves that the Lagrangian section $\widetilde{\sigma}$ is also  globally defined and $S=\widetilde{\sigma}\circ \pi.$ 

On the other hand, if $\widetilde{\sigma}':Q\to T^*Q/\Lambda$ is another Lagrangian section and 
$$\widehat{F}(x)=\widehat{\mu}(\widetilde{\sigma}'(\pi(x)),x), \mbox{ for all } x\in M,$$
then, using the free character of the action $\widehat{\mu},$ we conclude that $\widetilde{\sigma}=\widetilde{\sigma}'.$

\end{proof}

Now, suppose that for a fibration $(M,Q,\pi)$, we have  actions $\phi: G \times Q \to Q$ and $\Phi: G \times M \to M$ of a Lie group $G$ on $Q$ and $M$ respectively, such that $\pi$ is $G$-equivariant, i.e. the following diagram commutes
\[
\xymatrix{M \ar[d]_\pi \ar[rr]^{\Phi_g} &&M \ar[d]^\pi \\
Q \ar[rr]^{\phi_g} &&Q
}
\]
for all $g \in G$. In such a case we say that $(M,Q,\pi,\phi,\Phi)$ is a $G$-fibration. 

\begin{definition}
Let $(M,Q,\pi,\w)$ be a Labrangian fibration and $\Phi,\phi$ be actions on $M$ and $Q$ respectively,  such that $(M,Q,\pi,\phi,\Phi)$ is a $G$-fibration. Then $(M,Q,\pi,\w,\phi,\Phi)$
 is a {$G$-Lagrangian fibration} if the action $\Phi$ is symplectic.   
\end{definition}

A first example of $G$-Lagrangian fibration is $(T^*Q,Q,\piq,\w_Q,\phi,\lc)$ when we consider an action $\phi: G \times Q \to Q$ of a Lie group $G$ on the manifold $Q$. 

\medskip

Now, fix $(M,Q,\pi,\w,\phi,\Phi)$ a complete $G$-Lagrangian fibration. We can consider the action $\mu$ given by \eqref{mu}. Since $\Phi_g$ is symplectic, then, for all $\alpha\in \Omega^1(Q),$ 
$$i_{(\Phi_g)_*X_{\pi^*\alpha}}\w=(\Phi^*_{g^{-1}})(i_{X_{\pi^*\alpha}}\w)=(\Phi^*_{g^{-1}})(\pi^*\alpha)=\pi^*(\phi_{g^{-1}}^*\alpha),$$
where $((\Phi_g)_*X_{\pi^*\alpha})(x)=T_{\Phi_{g^{-1}}(x)}\Phi_g ( X_{\pi^*\alpha}(\Phi_{g^{-1}}(x)))$, for every $x\in M.$ Therefore, 
$$(T_x\Phi_g)(X_{\pi^*\alpha}(x))=X_{\pi^*(\phi^*_{g^{-1}}\alpha)}(\Phi_g(x)),$$
that is, $X_{\pi^*\alpha}(x)=(T_{\Phi_g(x)}\Phi_{g^{-1}})(X_{\pi^*(\phi^*_{g^{-1}}(\alpha))}(\Phi_g(x)))$. Hence,  we obtain the $G$-equivariance of the action $\mu$, i.e.
\begin{equation}\label{mug}
\mu_{\phi_g(q)}((T^*\phi)_g\alpha_q,\Phi_g(x))=\Phi_g(\mu_q(\alpha_q,x))\;\;\; \mbox{ for all } \alpha_q\in T_q^*Q \mbox{ and } x\in \pi^{-1}(q). 
\end{equation}
An important consequence of this fact is that the Lagrangian submanifold $\Lambda$ is $G$-invariant when we consider the cotangent lift action $\lc$ on $T^*Q$. Therefore, the cotangent lifted action $\lc$ induces a $G$-action $\widetilde{T^*\phi}$ on $T^*Q/\Lambda$ and $(T^*Q/\Lambda,Q,\widetilde{\pi}_Q,\widetilde{\w}_Q,\phi,\widetilde{T^* \phi})$ is a $G$-Lagrangian fibration.

\medskip

Let $\widetilde{\Phi}:G\times T^*Q/\Lambda\to T^*Q/\Lambda$ be another action on $T^*Q/\Lambda$ such that $(T^*Q/\Lambda,Q,\widetilde{\pi}_Q,\widetilde{\w}_Q,\phi,\widetilde{\Phi})$ is a $G$-Lagrangian fibration. From (\ref{mug}) for the corresponding action $\widetilde{\mu}_Q$, we have that 
\begin{equation}\label{+}
(\widetilde{T^*\phi})_g([\alpha_q]) + \widetilde{\Phi}_g([\beta_g])=\widetilde{\Phi}_g([\alpha_q + \beta_q])
\end{equation}
for all $\alpha_q,\beta_q\in T_q^*Q.$ 

\medskip 

Then, we define the section $\Sigma(g): Q\to T^*Q/\Lambda$ of $\widetilde{\pi}_Q:T^*Q/\Lambda\to Q$ as follows

$$\Sigma(g)(q)=(\widetilde{T^*\phi})_{g^{-1}}(\widetilde\Phi_g([0_q])).$$

From (\ref{+}), it follows that 

$$\begin{array}{rcl}
\Sigma(g)(q)&=&
(\widetilde{T^*\phi})_{g^{-1}}(\widetilde{\Phi}_g([\gamma_q])-(\widetilde{T^*\phi})_g([\gamma_q])),\\&=&(\widetilde{T^*\phi})_{g^{-1}}(\widetilde{\Phi}_g([\gamma_q]))-[\gamma_q],
\end{array}$$
for $\gamma_q\in T_q^*Q,$ and thus, we obtain that 

$$\widetilde{\Phi}_g([\gamma_q])= (\widetilde{T^*\phi})_{g}([\gamma_q]) + (\widetilde{T^*\phi})_{g}(\Sigma(g)(q)).$$
Therefore, since $\widetilde\Phi$ and $\widetilde{T^*\phi}$ are actions, we deduce that 
\begin{equation}\label{actionsigma}
\Sigma({gh})=\Sigma(h)+(\widetilde{T^*\phi})_{h^{-1}}\circ \Sigma(g)\circ \phi_h,
\end{equation}
for all $g,h\in G.$ This condition means that $\Sigma:G\to \Sect(T^*Q/\Lambda)$ is a one-cocycle in the cohomology associated with the following complex: 

\begin{itemize}
\item A $n$-cochain is a map $\Sigma: G \times \stackrel{n}{\dots} \times G \to \Sect(T^*Q/\Lambda)$. We denote by $C^n(G,\Sect(T^*Q/\Lambda))$  the set of the $n$-cochains. The set of $0$-cochains is $\Sect(T^*Q/\Lambda).$
\item The coboundary operator $\delta_\phi: C^n(G,\Sect(T^*Q/\Lambda)) \to C^{n+1}(G,\Sect(T^*Q/\Lambda))$ is given by
\begin{eqnarray*}
\delta_\phi \Sigma (g_1, \dots, g_{n+1}) &=& (-1)^{n+1} \Sigma(g_2, \dots, g_{n+1}) + \\
&&+ \sum^n_{i=1} (-1)^{n+i+1} \Sigma(g_1, \dots, g_{i-1}, g_ig_{i+1}, \dots, g_{n+1}) +\\
&&+(\widetilde{T^*\phi})_{g_{n+1}^{-1}} \circ \Sigma(g_1, \dots, g_n)\circ \phi_{g_{n+1}}. 
\end{eqnarray*}
\end{itemize}

Denote by $\Sect_L(T^*Q/\Lambda)$ the subspace of Lagrangian sections in the Lagrangian fibration  $\widetilde\pi_Q:(T^*Q/\Lambda,\widetilde\w_Q)\to Q$.  Since $\widetilde{T^*\phi}$  is a symplectic action  for $(T^*Q/\Lambda, \widetilde{\w}_Q)$, then   $C^\bullet(G,\Sect_L(T^*Q/\Lambda))$ determines a subcomplex of $(C^\bullet(G,\Sect(T^*Q/\Lambda)), \delta_\phi).$ 

\medskip

In  general, we have the following result. 
\begin{proposition}\label{FL1}
Let $(M,Q,\pi,\w)$ be a complete Lagrangian fibration and $\phi:G\times Q\to Q$ be an action on $Q$. 
\begin{enumerate}
\item If $\Phi^1,\Phi^2$ are two symplectic lifts of $\phi:G\times Q\to Q$  then, for all $g\in G,$  there exists  a Lagrangian section $\Sigma(g): Q\to T^*Q/\Lambda$ of $\widetilde{\pi}_Q:T^*Q/\Lambda\to Q$ such that 
\begin{equation}\label{P1P2}
\Phi^2_g(x)=\Phi^1_g(\widehat{\mu}(\Sigma(g)({\pi(x)}), x)),  \mbox{ for $x\in M,$ }
\end{equation}
where $\widehat{\mu}:T^*Q/\Lambda \times M \to M$ is the free fibered action given in  (\ref{muuu}). 
 \item If $(M,Q,\pi,\w,\phi,\Phi)$ is a $G$-Lagrangian fibration, for $\Sigma:G\to \Sect(T^*Q/\Lambda)$,  the map $\Phi^\Sigma:G\times M\to M$ given by $$\Phi_g^\Sigma(x)=\Phi_g(\widehat{\mu}(\Sigma(g)({\pi(x)}),x)),$$ for $x\in M,$ defines an action of $G$ on $M$ if and only if the map $\Sigma$ is a one-cocycle in the cohomology complex $(C^\bullet(G,\Sect(T^*Q/\Lambda)), \delta_\phi).$ 
\item In the hypothesis of $(ii)$, the action $\Phi^\Sigma$ is also symplectic  if and only if the map $\Sigma$ is a one-cocycle in the cohomology subcomplex $(C^\bullet(G,\Sect_L(T^*Q/\Lambda)), \delta_\phi).$ 
\end{enumerate}
 \end{proposition}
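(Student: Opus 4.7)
For part (i), the plan is to pass to the ``difference'' of the two lifts. I would fix $g\in G$ and set $\widehat{F}_g := \Phi^1_{g^{-1}} \circ \Phi^2_g$. Since both $\Phi^1_g$ and $\Phi^2_g$ cover $\phi_g$, the composition $\widehat{F}_g$ is fibered over the identity, i.e.\ $\pi\circ\widehat{F}_g=\pi$, and it is symplectic as a composition of symplectomorphisms. Proposition \ref{lemma2} then delivers a unique Lagrangian section $\Sigma(g)\colon Q\to T^*Q/\Lambda$ with $\widehat{F}_g(x)=\widehat{\mu}(\Sigma(g)(\pi(x)),x)$; rearranging yields \eqref{P1P2}. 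Smoothness of $g\mapsto\Sigma(g)$ will follow from the uniqueness clause of Proposition \ref{lemma2} together with the smooth dependence of $\widehat{F}_g$ on $g$.

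For part (ii), I would compare $\Phi^\Sigma_g\circ\Phi^\Sigma_h(x)$ with $\Phi^\Sigma_{gh}(x)$ and read off the cocycle identity via the freeness of $\widehat\mu$. Setting $q=\pi(x)$ and applying the equivariance relation \eqref{mug} for the fixed lift $\Phi$, which at the quotient level reads $\Phi_g(\widehat\mu([\alpha_q],x))=\widehat\mu((\widetilde{T^*\phi})_g[\alpha_q],\Phi_g(x))$, I can push the inner $\widehat\mu$-factor past $\Phi_g$. Combining this with the fact that $\widehat\mu$ is itself an action (so quotient-fibre translations add), and using that $\pi(\Phi^\Sigma_h(x))=\phi_h(q)$, the identity $\Phi^\Sigma_{gh}=\Phi^\Sigma_g\circ\Phi^\Sigma_h$ will reduce, after peeling off $(\widetilde{T^*\phi})_{gh}$ from both sides, to
\[ \Sigma(gh)(q) = \Sigma(h)(q) + (\widetilde{T^*\phi})_{h^{-1}}(\Sigma(g)(\phi_h(q))), \]
which is precisely \eqref{actionsigma}, i.e.\ the one-cocycle condition in $(C^\bullet(G,\Sect(T^*Q/\Lambda)),\delta_\phi)$. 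The converse direction amounts to reversing these equalities.

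For part (iii), the observation is that $\Phi^\Sigma_g=\Phi_g\circ\widehat{F}_g$, where $\widehat{F}_g(x)=\widehat\mu(\Sigma(g)(\pi(x)),x)$ is exactly the map analyzed in Proposition \ref{lemma2}. Since $\Phi_g^*\w=\w$, one has $(\Phi^\Sigma_g)^*\w=\widehat{F}_g^*\w$, and formula \eqref{Fw} in the proof of Proposition \ref{lemma2} gives $\widehat{F}_g^*\w=\w+\pi^*(\Sigma(g)^*\widetilde\w_Q)$. Hence $\Phi^\Sigma_g$ is symplectic for all $g$ if and only if $\Sigma(g)^*\widetilde\w_Q=0$ for all $g$, equivalently $\Sigma$ takes values in $\Sect_L(T^*Q/\Lambda)$.

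The main obstacle I anticipate is bookkeeping rather than depth: one has to be careful in tracking base points in the twisted cocycle condition \eqref{actionsigma}, where $(\widetilde{T^*\phi})_{h^{-1}}$ transports an element of $T^*_{\phi_h(q)}Q/\Lambda_{\phi_h(q)}$ back to $T^*_qQ/\Lambda_q$ so that the sum with $\Sigma(h)(q)$ makes sense fibrewise, and in using linearity of $(\widetilde{T^*\phi})_g$ on each fibre in order to commute it with the abelian-group sum. Beyond this, the argument is a direct consequence of two earlier inputs already in place: Proposition \ref{lemma2} (vertical symplectomorphisms of $M$ are exactly translations by Lagrangian sections of $\widetilde\pi_Q$) and the equivariance identity \eqref{mug} (which guarantees that $T^*\phi$ preserves $\Lambda$ and descends to the well-defined $G$-action $\widetilde{T^*\phi}$ on $T^*Q/\Lambda$).
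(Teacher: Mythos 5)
Your proposal is correct and follows essentially the same route as the paper: part (i) via the symplectomorphism $\widehat F_g=\Phi^1_{g^{-1}}\circ\Phi^2_g$ and Proposition \ref{lemma2}, part (ii) by comparing $\Phi^\Sigma_{gh}$ with $\Phi^\Sigma_g\circ\Phi^\Sigma_h$ using \eqref{mug} and the freeness of $\widehat\mu$, and part (iii) by writing $\Phi^\Sigma_g=\Phi_g\circ\widehat F_g$ and invoking formula \eqref{Fw}. If anything, your explicit use of \eqref{Fw} in (iii) to get the ``if and only if'' is slightly more careful than the paper's one-line appeal to Proposition \ref{lemma2}.
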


\begin{proof}
(i) Consider, for all $g\in G,$ the sympectomorphism  $\widehat{F}_g: M\to M$ given by $\widehat{F}_g=\Phi^1_{g^{-1}}\circ \Phi^2_{g}$. Then, if we apply  Proposition \ref{lemma2} to $\widehat{F}_g,$  we conclude that (\ref{P1P2}) holds.

\medskip

(ii) The condition $\Phi^\Sigma$ is an action is equivalent with the relation 
$$\Phi_h(\widehat{\mu}(\Sigma({gh})({\pi(x)}),x))=\widehat{\mu}(\Sigma(g)({\phi_h(\pi(x))}),\Phi_h(\widehat\mu(\Sigma(h)(\pi(x)),x))).$$
Now, from (\ref{mug}), we have that 
$$\begin{array}{lcr}
\widehat\mu((\widetilde{T^*\phi})_h(\Sigma(gh)(\pi(x))),\Phi_h(x))&=&\widehat\mu(\Sigma(g)(\phi_h(\pi(x))), \widehat\mu((\widetilde{T^*\phi})_h(\Sigma(h)(\pi(x))),\Phi_h(x)))\\
&=& \widehat\mu(\Sigma(g)(\phi_h(\pi(x))) + (\widetilde{T^*\phi})_h(\Sigma(h)(\pi(x))), \Phi_h(x)).
\end{array}$$
Therefore, the free character of $\widehat\mu,$ implies that 
$$\Sigma({gh})=\Sigma(h)+(\widetilde{T^*\phi})_{h^{-1}}\circ \Sigma(g)\circ \phi_h,$$
i.e. $\Sigma$ is a one-cocyle for $(C^\bullet(G,\Sect(T^*Q/\Lambda), \delta_\phi).$ 

(iii) Let $\widehat{G}_g$ be the diffeomorphism given by 
$$\widehat{G}_g(x)=\widehat\mu(\Sigma(\pi(x)),x),\mbox{ for }x\in M.$$

Then, it is clear that 
$$\Phi_g^\Sigma=\Phi_g\circ \widehat{G}_g.$$

This, using Proposition \ref{lemma2},  implies that $\Sigma(g)$ is a Lagrangian section for the fibration $\widetilde{\pi}_Q:T^*Q/\Lambda\to Q.$ 
\end{proof}

\medskip

Moreover, the cohomology $H^\bullet(G,\phi, \Sect_L(T^*Q/\Lambda))$ of the complex $(C^\bullet((G,\Sect_L(T^*Q/\Lambda)), \delta_\phi)$ classifies the symplectic  actions on a  complete Lagrangian fibration on $Q$ whose projection is a fixed action $\phi$ on $Q$.

\begin{theorem}\label{FL2}
Let $\phi: G \times Q \to Q$ be an action of the Lie group $G$ on a manifold $Q$ and $(M,Q,\pi,\w)$ be a complete Lagrangian fibration on $Q$. 
Consider $\Sigma^1, \Sigma^2: G \to \Sect_L{(T^*Q/\Lambda)}$  two  one-cocycles in the cohomology complex  $(C^\bullet(G,\Sect_L(T^*Q/\Lambda)),\delta_\phi)$, and $\Phi^{\Sigma^1}$ and $\Phi^{\Sigma^2}$ their corresponding symplectic actions of $G$ on $M$. Then, there  exists a symplectomorphism $\widehat F:M\to M$  such that 
\begin{eqnarray}
\label{F fibradatilde}          &&\pi \circ \widehat F= \pi \mbox{ and} \\ 
\label{F equivariantetilde} && \widehat F \circ  \Phi^{\Sigma^1}_g=\Phi^{\Sigma^2}_g \circ \widehat F, \mbox{ for all } g \in G 
\end{eqnarray}
if and only if $[\Sigma^1]=[\Sigma^2] \in H^1(G,\phi, \Sect_L(T^*Q/\Lambda))$.  
\end{theorem}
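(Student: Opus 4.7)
The proof will closely mirror that of Theorem \ref{T4}, with translations by closed $1$-forms replaced by translations (via $\widehat{\mu}$) by Lagrangian sections of the symplectic reference. The backbone is Proposition \ref{lemma2}, which gives a bijection between fibered symplectomorphisms of $M$ and Lagrangian sections of $\widetilde{\pi}_Q : T^*Q/\Lambda \to Q$.

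For the \emph{only if} direction, suppose $\widehat{F}:M\to M$ is a symplectomorphism with $\pi\circ\widehat{F}=\pi$ that intertwines $\Phi^{\Sigma^1}$ and $\Phi^{\Sigma^2}$. By Proposition \ref{lemma2}, there is a unique Lagrangian section $\widetilde{\sigma}:Q\to T^*Q/\Lambda$ such that $\widehat{F}(x)=\widehat{\mu}(\widetilde{\sigma}(\pi(x)),x)$ for all $x\in M$. Plugging this into the relation $\widehat{F}\circ\Phi^{\Sigma^1}_g=\Phi^{\Sigma^2}_g\circ\widehat{F}$ and using the definition of $\Phi^{\Sigma^i}_g$ from Proposition \ref{FL1}, together with the $G$-equivariance \eqref{mug} of $\mu$ (which descends to $\widehat{\mu}$ via the induced action $\widetilde{T^*\phi}$) and the fact that $\pi(\widehat\mu(\alpha,x))=\pi(x)$ and $\pi(\Phi_g(x))=\phi_g(\pi(x))$, both sides can be written in the form $\Phi_g(\widehat{\mu}(\cdot,x))$. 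Exploiting the \emph{free} character of $\widehat{\mu}$, the arguments in the second slot must agree, yielding
\begin{equation*}
\Sigma^1(g)(\pi(x))+(\widetilde{T^*\phi})_{g^{-1}}(\widetilde{\sigma}(\phi_g(\pi(x))))=\Sigma^2(g)(\pi(x))+\widetilde{\sigma}(\pi(x)),
\end{equation*}
i.e.\ $\Sigma^2(g)-\Sigma^1(g)=(\widetilde{T^*\phi})_{g^{-1}}\circ\widetilde{\sigma}\circ\phi_g-\widetilde{\sigma}=\delta_\phi(\widetilde{\sigma})(g)$. Hence $[\Sigma^1]=[\Sigma^2]$ in $H^1(G,\phi,\Sect_L(T^*Q/\Lambda))$.

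For the \emph{if} direction, assume $[\Sigma^1]=[\Sigma^2]$, so there exists $\widetilde{\sigma}\in\Sect_L(T^*Q/\Lambda)$ with $\Sigma^2-\Sigma^1=\delta_\phi(\widetilde{\sigma})$. Define $\widehat{F}:M\to M$ by $\widehat{F}(x)=\widehat{\mu}(\widetilde{\sigma}(\pi(x)),x)$. Since $\widetilde{\sigma}$ is Lagrangian, Proposition \ref{lemma2} guarantees that $\widehat{F}$ is a symplectomorphism with $\pi\circ\widehat{F}=\pi$. Finally, one verifies \eqref{F equivariantetilde} by running the previous computation backwards: expanding $\widehat{F}\circ\Phi^{\Sigma^1}_g$ and $\Phi^{\Sigma^2}_g\circ\widehat{F}$ via the definition of $\Phi^{\Sigma^i}$, \eqref{mug} and the additivity of $\widehat{\mu}$, the cocycle identity $\Sigma^2(g)-\Sigma^1(g)=(\widetilde{T^*\phi})_{g^{-1}}\circ\widetilde{\sigma}\circ\phi_g-\widetilde{\sigma}$ collapses both sides to the same expression.

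The main technical step, and the only delicate one, is the careful bookkeeping of the $G$-equivariance in the first part: one must commute the outer application of $\Phi_g$ past $\widehat{\mu}(\widetilde{\sigma}(\phi_g(\pi(x))),\cdot)$ using \eqref{mug}, which is what introduces the twist by $(\widetilde{T^*\phi})_{g^{-1}}$ and thereby matches the coboundary formula of $\delta_\phi$. Everything else is a direct translation of the argument of Theorem \ref{T4}.
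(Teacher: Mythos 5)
Your proposal is correct and follows essentially the same route as the paper: both directions reduce to Proposition \ref{lemma2}, the $G$-equivariance \eqref{mug} of $\widehat{\mu}$, and its freeness, to identify $\Sigma^2(g)-\Sigma^1(g)$ with the coboundary of the Lagrangian section $\widetilde{\sigma}$ representing $\widehat{F}$. The only (immaterial) difference is an overall sign convention in writing $\delta_\phi(\widetilde{\sigma})$ versus $\delta_\phi(-\widetilde{\sigma})$, which does not affect the conclusion $[\Sigma^1]=[\Sigma^2]$.
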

\begin{proof}
Suppose $\widehat F:M\to M$ is a symplectomorphism and that  \eqref{F fibradatilde} and \eqref{F equivariantetilde} hold. Then, the conditions of Proposition \ref{lemma2} work  and therefore, there exists a Lagrangian section  $\widetilde{\sigma}:Q\to T^*Q/\Lambda$ of $\widetilde\pi_Q$ such that $\widehat F(x)=\widehat \mu(\widetilde{\sigma}(\pi(x)),x)$. Since $\widehat F \circ \Phi^{\Sigma^1}_g=\Phi^{\Sigma^2}_g \circ \widehat F$, we have that
$$\widehat \mu(\widetilde{\sigma}(\phi_g(\pi(x))), \Phi_g(\widehat{\mu}(\Sigma^1(g)({\pi(x)}),x)))=\Phi_g(\widehat \mu(\Sigma^2(g)({\pi(x)}), \widehat{\mu}(\widetilde{\sigma}(\pi(x)), x))$$
for all $x\in M.$

Thus, using (\ref{mug}), it follows that 

$$
\widehat{\mu}(\widetilde{\sigma}(\phi_g(\pi(x))),\widehat{\mu}((\widetilde{T^*\phi})_g(\Sigma^1(g)(\pi(x))),\Phi_g(x))=\widehat\mu((\widetilde{T^*\phi})_g(\Sigma^2(g)(\pi(x)) + \widetilde\sigma(\pi(x)),\Phi_g(x))),$$
which implies that 
$$
\widehat{\mu}(\widetilde{\sigma}(\phi_g(\pi(x))) + (\widetilde{T^*\phi})_g(\Sigma^1(g)(\pi(x))),\Phi_g(x))=\widehat\mu((\widetilde{T^*\phi})_g(\Sigma^2(g)(\pi(x)) + (\widetilde{T^*\phi})_g(\widetilde{\sigma}(\pi(x))),\Phi_g(x)).$$

Therefore, since the action $\widehat\mu$ is free, we deduce that

$$\Sigma^2(g)-\Sigma^1(g)=(\widetilde{T^*\phi})_{g^{-1}}\circ \widetilde{\sigma}\circ \phi_g-\widetilde{\sigma}=\delta_\phi(-\widetilde{\sigma})(g),$$
that is, $[\Sigma^1]=[\Sigma^2]$. 

\medskip

Conversely, if $[\Sigma^1]=[\Sigma^2]$, then there exists a Lagrangian section  $\widetilde{\sigma}:Q\to T^*Q/\Lambda$ of $\widetilde\pi_Q$ such that $$\Sigma^1(g)-\Sigma^2(g)=\delta_\phi(\widetilde{\sigma})(g)=\widetilde{\sigma}-(\widetilde{T^*\phi})_{g^{-1}}\circ \widetilde{\sigma}\circ \phi_g.$$ Thus, the map $\widehat{F}:M\to M$ given by 
$$\widehat{F}(x)=\widehat \mu(\widetilde{\sigma}(\pi(x)),x), \mbox{ for } x\in M,$$
satisfies the conditions of the theorem. Note that 
$$\widehat{F}^{-1}(y)=\widehat \mu(-\widetilde{\sigma}(\pi(y)),y), \mbox{ with } y\in M.$$
\end{proof}

In the case of the symplectic reference associated to a complete $G$-Lagrangian fibration $(M,Q,\pi,\w,$ $\phi,\Phi)$ we have 

\begin{corollary}
Let $(T^*Q/\Lambda,Q,\widetilde{\pi}_Q,\widetilde{\w}_Q,\phi,\widetilde{T^*\phi})$  be the $G$-symplectic reference  of a complete $G$-Lagrangian fibration  $(M,Q,\pi,\w,\phi,\Phi)$. Then: 
\begin{enumerate}
\item Every  symplectic action $\Phi:G\times T^*Q/\Lambda\to T^*Q/\Lambda$ which  projects on  $\phi$ is given by 
$$\Phi_g=\widetilde{(T^*\phi)}_g\circ t_{\Sigma(g)}$$
where $\Sigma(g):Q\to T^*Q/\Lambda$ is a section of $\widetilde{\pi}_Q:T^*Q/\Lambda\to Q$  and $t_{\Sigma(g)}:T^*Q/\Lambda\to T^*Q/\Lambda$ is the translation $t_{\Sigma_g}([\gamma_q])=[\gamma_q] + \Sigma(g)(q)$. 
\item If $\Sigma: G\to \Sect(T^*Q/\Lambda)$ is a map then $\widetilde\Phi_g^\Sigma=(\widetilde{T^*\phi})_g\circ t_{\Sigma(g)}$ defines an  action on $T^*Q/\Lambda$  if and only if the map $\Sigma$ is a one-cocycle in the cohomology complex $(C^\bullet((G,\Sect(T^*Q/\Lambda)), \delta_\phi).$
\item 
$\widetilde\Phi_g^\Sigma$ defines a  symplectic action  if and only if the map $\Sigma$ is a one-cocycle in the cohomology complex $(C^\bullet((G,\Sect_L(T^*Q/\Lambda)), \delta_\phi).$ 
\item 
Consider $\Sigma^1, \Sigma^2: G \to \Sect_L{(T^*Q/\Lambda)}$ two one-cocycles in the cohomology complex $(C^\bullet(G,\Sect_L(T^*Q/\Lambda)),$ $\delta_\phi)$, and $\widetilde\Phi^{\Sigma^1}$ and $\widetilde\Phi^{\Sigma^2}$ their corresponding symplectic actions on $(T^*Q/\Lambda, \widetilde\w_Q).$ Then, there exists a symplectomorphism $\widetilde F:T^*Q/\Lambda\to T^*Q/\Lambda$  such that 
$$\begin{array}{rcl}       
&& \widetilde\pi_Q \circ \widetilde F= \widetilde\pi_Q \mbox{ and } \\ 
&& \widetilde F \circ  \tilde\Phi^{\Sigma^1}_g=\tilde\Phi^{\Sigma^2}_g \circ \widetilde F, \mbox{ for all } g \in G 
\end{array}
$$
if and only if $[\Sigma^1]=[\Sigma^2] \in H^1(G,\phi, \Sect_L(T^*Q/\Lambda))$.  
\end{enumerate}
\end{corollary}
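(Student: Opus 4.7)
The strategy is to specialize the general results Proposition \ref{FL1} and Theorem \ref{FL2} to the case in which the complete Lagrangian fibration is the symplectic reference itself. Concretely, take $(M,Q,\pi,\w) = (T^*Q/\Lambda,Q,\widetilde\pi_Q,\widetilde\w_Q)$ and choose as ``reference'' symplectic lift of $\phi$ the action $\Phi = \widetilde{T^*\phi}$, which turns this data into a complete $G$-Lagrangian fibration by the discussion preceding the corollary.

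The key observation that makes this specialization work is that on the symplectic reference, the free fibered action $\widehat\mu:T^*Q/\Lambda\times M\to M$ of (\ref{muuu}) coincides with the group operation on fibers: for $M=T^*Q/\Lambda$, formula (\ref{+}) gives $\widehat\mu([\alpha_q],[\gamma_q])=[\alpha_q+\gamma_q]$. Hence for any section $\Sigma(g):Q\to T^*Q/\Lambda$ of $\widetilde\pi_Q$,
\begin{equation*}
\widehat\mu(\Sigma(g)(\widetilde\pi_Q(x)),x)=t_{\Sigma(g)}(x),\qquad x\in T^*Q/\Lambda,
\end{equation*}
with $t_{\Sigma(g)}$ as defined in Proposition \ref{lemmatilde}. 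Composing with $(\widetilde{T^*\phi})_g$ one gets exactly the formulas $\Phi_g=(\widetilde{T^*\phi})_g\circ t_{\Sigma(g)}=\widetilde\Phi^\Sigma_g$ appearing in (i)--(iii).

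With this identification in hand, the four parts follow directly. For (i), apply Proposition \ref{FL1}(i) with $\Phi^1=\widetilde{T^*\phi}$ and $\Phi^2=\Phi$: this produces a (Lagrangian) section $\Sigma(g)$ with $\Phi_g(x)=(\widetilde{T^*\phi})_g(\widehat\mu(\Sigma(g)(\widetilde\pi_Q(x)),x))$, which by the above identity is $(\widetilde{T^*\phi})_g\circ t_{\Sigma(g)}$. For (ii), apply Proposition \ref{FL1}(ii) with $\Phi=\widetilde{T^*\phi}$ to get that $\widetilde\Phi^\Sigma$ is a $G$-action iff $\Sigma$ is a one-cocycle in $(C^\bullet(G,\Sect(T^*Q/\Lambda)),\delta_\phi)$. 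Part (iii) follows in the same way from Proposition \ref{FL1}(iii). Finally, (iv) is Theorem \ref{FL2} applied to the symplectic reference, once one uses again that $\widehat\mu(\widetilde\sigma(\widetilde\pi_Q(x)),x)=t_{\widetilde\sigma}(x)$ in the definition of the intertwining symplectomorphism $\widetilde F$.

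I do not expect any real obstacle: the only point that requires any argument is the identification of $\widehat\mu$ with the fiberwise translation on $T^*Q/\Lambda$, and this is immediate from (\ref{+}). Once that translation is rewritten as $t_{\Sigma(g)}$, each of the four items is a direct transcription of the corresponding general statement.
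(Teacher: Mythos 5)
Your proposal is correct and matches the paper's (implicit) argument: the corollary is stated there without a separate proof precisely because it is the specialization of Proposition \ref{FL1} and Theorem \ref{FL2} to the complete Lagrangian fibration $(T^*Q/\Lambda,Q,\widetilde\pi_Q,\widetilde\w_Q)$ with reference lift $\widetilde{T^*\phi}$, using that the lattice of the symplectic reference is again $\Lambda$ and that $\widehat\mu$ reduces to the fiberwise translation $t_{\Sigma(g)}$ via \eqref{+}. The only point worth making explicit, which you do, is exactly that identification of $\widehat\mu$ with $t_{\Sigma(g)}$.
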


Finally, another application of our results is related with magnetic cotangent bundles. 

Indeed, suppose that $\beta$ is a closed $2$-form  on $Q$ and consider the symplectic structure on $T^*Q$ given by $\w_{Q,\beta}:=\w_Q+\piq^*\beta$.  Then, it is easy to prove that 
 $\pi_Q:(T^*Q, \w_{Q,\beta})\to Q$ is a complete Lagrangian fibration. Moreover, in this case, the Lagrangian submanifold $\Lambda$ of $T^*Q$ is just the zero section and, thus, the symplectic reference of  $\pi_Q:(T^*Q, \w_{Q,\beta})\to Q$ is the standard canonical projection  $\pi_Q:(T^*Q, \w_{Q})\to Q.$
 
 On the other hand, if $\phi:G\times Q\to Q$ is an action of $G$ on $Q$  and $g\in G$ then 
$$(T^*\phi)^*_g(\w_{Q,\beta})=\omega_Q + \pi_Q^*(\phi_g^*\beta).$$

So, if $\beta$ is $G$-invariant, we deduce that the cotangent lift of $\phi$ is a symplectic action for the total space of the complete Lagrangian fibration 
$$\pi_{Q,\beta}:(T^*Q,\w_{Q,\beta}) \to Q.$$
Using the previous facts,  Proposition \ref{FL1} and Theorem \ref{FL2}, we deduce the following result. 

\begin{corollary}
Let $\phi:G\times Q\to  Q$ be an action of the Lie group $G$ on the manifold $Q$ and $\beta$ be a closed $2$-form which is $G$-invariant. 

\begin{enumerate}
\item If $\Phi:G\times T^*Q\to T^*Q $ is a symplectic  action of a Lie group $G$ on  $(T^*Q,\w_Q+\pi_Q^*\beta)$ whose projection on $Q$ is the action $\phi:G\times Q\to Q,$  then
there exists a differentiable map  $A:G \times Q \to T^*Q$ such that  
\begin{enumerate}
\item$A$ is fibered on $Q$, i.e. $\pi_Q\circ A=pr_2, $
\item For each $g\in G$, the $1$-form $A_g$ on $Q$ is closed.  
\item The action $\Phi$ is given by 
$$
\Phi_g=(\lc)_g \circ t_{A(g)}, \mbox{ for all } g \in G.
$$
\end{enumerate} 
\item If $A:G\times Q\to T^*Q$ is a smooth map and $\Phi^A: G \times T^*Q \to T^*Q$ is given by $\Phi_g^A=(\lc)_g \circ t_{A(g)}$ then $\Phi^A$  is an action if and only if $A$ is a one-cocycle in the cohomology complex $(C^\bullet(G,\Omega^1(Q)),$ $\delta_\phi)$. 
\item $\Phi^A$  is also symplectic if and only if $A(g)$ is a closed 1-form on $Q,$ for all $g \in G$.
\item
If $A, B: G \to \Omega^1_c(Q)$ are  two one-cocycles in the cohomology complex $(C^\bullet(G,\Omega_c^1(Q)),\delta_\phi)$, and $\Phi^A$ and $\Phi^B$ their respective affine symplectic actions on $(T^*Q, \w_Q + \pi^*\beta)$, then  exists a symplectomorphism $F:T^*Q\to T^*Q$  such that 
$$\begin{array}{rcl}       
&& \piq \circ F= \piq \mbox{ and }\\ 
&& F \circ \Phi^A_g=\Phi^B_g \circ F, \mbox{ for all } g \in G 
\end{array}$$
if and only if $[A]=[B] \in H^1(G,\phi, \Omega^1_c(Q))$.  
\end{enumerate}

\end{corollary}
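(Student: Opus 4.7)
The plan is to reduce each part to the general results of Section \ref{Section3}, applied to the complete $G$-Lagrangian fibration $(T^*Q,Q,\pi_Q,\w_{Q,\beta},\phi,\lc)$. The paragraph preceding the corollary already verifies that this is a $G$-Lagrangian fibration (using $G$-invariance of $\beta$) and that its symplectic reference is the ordinary cotangent fibration $(T^*Q,\w_Q)$, with $\Lambda$ equal to the zero section.

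First I would unpack the identifications that make the reduction work. Since $\Lambda=0$, one has $T^*Q/\Lambda=T^*Q$ and $\widetilde{\w}_Q=\w_Q$, so $\Sect(T^*Q/\Lambda)=\Omega^1(Q)$ and $\Sect_L(T^*Q/\Lambda)=\Omega^1_c(Q)$ (a section $\alpha$ of $\pi_Q$ satisfies $\alpha^*\w_Q=-d\alpha=0$ precisely when it is closed). Next, the free fibered action $\widehat\mu$ is fiberwise addition: for any vertical vector field $X$ on $T^*Q$ one has $i_X\pi_Q^*\beta=0$, so the vertical lift of a $1$-form with respect to $\w_{Q,\beta}$ coincides with its vertical lift for $\w_Q$, whose time-one flow is $\gamma_q\mapsto\gamma_q+\lambda(q)$ by (\ref{def-v}). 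Finally, the induced action $\widetilde{\lc}$ on the reference is $\lc$ itself, so the cocycle condition $\Sigma(gh)=\Sigma(h)+(\widetilde{\lc})_{h^{-1}}\circ\Sigma(g)\circ\phi_h$ of Section \ref{Section3} specialises to $A(gh)=A(h)+\phi_h^*A(g)$, matching the complex of Section \ref{Section2}.

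Under these identifications $\Phi^A_g=(\lc)_g\circ t_{A(g)}$ equals the map $\Phi^\Sigma_g$ of Proposition \ref{FL1}. Part (1) then follows from Proposition \ref{FL1}(i) applied to the two symplectic lifts $\Phi^1=\lc$ and $\Phi^2=\Phi$: it produces the required Lagrangian section $A(g)\in\Omega^1_c(Q)$, with smoothness of $A:G\times Q\to T^*Q$ inherited from that of $\Phi$. Parts (2) and (3) are Proposition \ref{FL1}(ii)--(iii) transcribed verbatim through these identifications. Part (4) is a direct application of Theorem \ref{FL2}: the ambient symplectomorphism is the fiber translation $F=t_\alpha$ by the closed $1$-form $\alpha$ realising $[A]=[B]$, exactly as in the proof of Theorem \ref{T4}.

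The only genuine verification, which I expect to be the main (mild) obstacle, is the claim that $\widehat\mu$ is unaffected by the magnetic correction $\pi_Q^*\beta$; once that vertical-lift computation is in hand, the four items reduce to bookkeeping through the identifications above, with no further calculation required.
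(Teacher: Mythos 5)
Your proposal is correct and follows essentially the same route as the paper, which deduces the corollary by applying Proposition \ref{FL1} and Theorem \ref{FL2} to the complete $G$-Lagrangian fibration $(T^*Q,Q,\pi_Q,\w_Q+\pi_Q^*\beta,\phi,T^*\phi)$ after observing that its symplectic reference is the standard cotangent fibration with $\Lambda$ the zero section. Your explicit check that the vertical lifts (and hence $\widehat{\mu}$, sections, and Lagrangian sections) are unaffected by the magnetic term $\pi_Q^*\beta$ is exactly the identification the paper leaves implicit, and it is valid because the magnetic vertical lift is itself vertical and $\pi_Q^*\beta$ annihilates vertical vectors.
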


\end{document}